\newtheorem{theorem}{Theorem}
\newtheorem{theoremb}{Theorem}
\newtheorem{theoremc}{Theorem}
\newtheorem{theoremd}{Theorem}
\newtheorem{theoreme}{Theorem}
\newtheorem{dfn}[theoremb]{Definition}
\newtheorem{rk}[theoremc]{Remark}
\newtheorem{lem}[theoremd]{Lemma}
\newtheorem{examp}[theoreme]{Example}
\newtheorem{prop}[theorem]{Proposition}
\newcommand\bib[1]{\bibitem[#1]{#1}}
\newcommand\1{{\bf 1}}
\renewcommand\a{\alpha}
\renewcommand\b{\beta}
\newcommand\C{{\mathbb C}}
\newcommand\Cc{{\let\mathcal\mathscr\mathcal C}}
\renewcommand\d{\delta}
\newcommand\D{{\mathcal D}}
\newcommand\E{\mathcal{E}}
\newcommand\g{\gamma}
\renewcommand\l{\lambda}
\newcommand\La{\Lambda}
\newcommand\oo{\omega}
\newcommand\op[1]{\mathop{\rm #1}\nolimits}
\newcommand\ot{\otimes}
\newcommand\p{\partial}
\newcommand\R{{\mathbb R}}
\renewcommand\t{\tau}
\newcommand\vp{\varphi}
\newcommand\z{\sigma}
\begin{document}

 \title[Symmetry, compatibility and exact solutions]{Symmetry, compatibility\\ and exact solutions of PDEs}
 \author{Boris Kruglikov}
 \date{}
 \address{Institute of Mathematics and Statistics, University of Troms\o, Troms\o\ 90-37, Norway. \quad
E-mail: boris.kruglikov@uit.no.}
 \keywords{Compatibility conditions, overdetermined systems of PDEs, characteristic,
invariant solutions, Darboux integrability, exact solvability.}

 \vspace{-14.5pt}
 \begin{abstract}
We discuss various compatibility criteria for overdetermined systems of PDEs generalizing the approach
to formal integrability via brackets of differential operators. Then we give sufficient conditions
that guarantee that a PDE possessing a Lie algebra of symmetries has invariant solutions.
Finally we discuss models of equations with large symmetry algebras, which eventually
lead to integration in closed form.
 \end{abstract}

 \maketitle

\section*{Introduction}

Overdetermined systems of PDEs always have compatibility conditions. If these are
satisfied, the system is called formally integrable (we assume regularity throughout the paper)
and we can formally parametrize the space of solutions \cite{C$_3$,BCG$^3$,KL$_2$}.

For Frobenius type systems the compatibility conditions are just equalities of mixed derivatives.
For instance (we denote the partial derivatives as usual by indices) the system $\E$ on $\R^2(x^1,x^2)$
 $$
\{F_{ij}:u_{ij}=f_{ij}(x,u,\p u)\,|\,i,j=1,2\}
 $$
has compatibility conditions
 $$
\left\{\begin{array}{l} \D_1(F_{12})=\D_2(F_{11})\\
\D_1(F_{22})=\D_2(F_{12})\end{array}\right. \Leftrightarrow
\left\{\begin{array}{l} \D_1(f_{12})=\D_2(f_{11})\\
\D_1(f_{22})=\D_2(f_{12})\end{array}\right.\!\!\!\mod\E.
 $$
In general this is wrong.
For example, consider the following system $\E$ on $\R^4(x^0,x^1,x^2,x^3)$
 \begin{equation}\label{NoC}
 \left\{\begin{array}{l}
F_1:\ u_{13} = u_{22}+f_1(x,u,\p u),\\
F_2:\ u_{12} = u_{03}+f_2(x,u,\p u),\\
F_3:\ u_{02} = u_{11}+f_3(x,u,\p u).
 \end{array}\right.
 \end{equation}
Equalities of the mixed derivatives do not yield compatibility here,
because the first compatibility condition involving only two equations is of order 3
(the corresponding syzygy operator, which is the Mayer bracket described in Theorem \ref{MB},
is of the second order), and it depends on derivatives of $f$ up to the 2nd order
(for more details see Section \ref{S1}).

There exists however a compatibility condition of order 2 (the syzygy operator involves
only first derivatives of $f$; $F_i$ below denotes the difference of the left and
right hand sides in the respective equation):
 \begin{equation}\label{Dsy}
\D_1(F_1)+\D_2(F_2)+\D_3(F_3)\equiv0\mod\E.
 \end{equation}

In Section \ref{S1} we discuss how to calculate the compatibility conditions.
This is related in the next Section to classical and higher symmetries, and we discuss
the question of existence of invariant solutions.

Consider a PDE $\E$ and a subalgebra $\mathcal{G}\subset\op{sym}(\E)$; at this point we restrict
to the local extrinsic (point, contact or higher) symmetries.
In general it is not true that $\E$ has $\mathcal{G}$-invariant solutions, as was noticed
in \cite{K$_1$}. The simplest counter-example constitute linear systems with the symmetry
being shift by a nonzero solution.

Another example is given by a pair of constant coefficients non-homogeneous linear scalar PDEs.
They commute provided there are no zero order terms, and so the equations are symmetries of each other.
Generically the PDEs are compatible, but this does not happen always
(e.g. the system $\{u_{xx}=1,u_x=0\}$ has no solutions at all).
A similar story happens for matrix differential operators and symmetries.

In Section \ref{S2} we demonstrate that this is a consequence of either degeneracy or
higher dimensionality of the symmetry algebra compared to the amount of independent variables.
We will prove that under certain genericity conditions the symmetry is compatible with
the equation. This yields (a similar result is proven by another approach in \cite{IV}):
 \begin{quote}
{\em Provided the symbols of $\E$ and $\mathcal{G}$ are generic,
the system $\E$ has $\mathcal{G}$-invariant solutions.}
 \end{quote}

In Section \ref{S3} we discuss implications that existence of a large symmetry group has on
the solution space of a PDE system. At this point we need to consider intrinsic symmetries.
Relations between extrinsic and intrinsic symmetries are given by Lie-B\"acklund type theorems,
see \cite{AI,KLV,AKO,AK}, and this relates this integrability problem with what is discussed above
(see also \cite{L$_1$,K$_2$,Ga,BCA} for applications of symmetries and generalizations).

We will concentrate on exact solvability of ODEs and PDEs and discuss relations
with Darboux integrability via examples. Informally a lot of symmetry implies exact integrability,
more precisely this holds true for maximal symmetric models (can fail for sub-maximal cases).

We will briefly discuss some symmetric models. Relations of integrability to transformations
and differential substitutions in PDEs is central in \cite{L$_2$,K$_3$,K$_4$} for the case when the system $\E$
depends on 1 function of 1 argument (so called Lie class $\oo=1$). This applies in the other
cases too, but will be considered elsewhere.

The paper is organized so that all sections can be read independently.

\medskip

{\textsc{Acknowledgement.}} I thank V.\, Lychagin and A.\, Prasolov for helpful discussions.
Some calculations in Section \ref{S3} were performed with \textsc{Maple} package \textsc{DifferentialGeometry} by
I.\, Anderson. I am grateful for organizers of the conference SPT-2011 for hospitality.

\section{Compatibility, differential syzygies and brackets}\label{S1}

An overdetermined system of differential equations can be viewed
geometrically as a finite sequence of submanifolds $\E_k\subset
J^k(\pi)$ in jets with $\E_{k-1}^{(1)}\supset\E_k$, where $\pi:E\to
M$ is a vector bundle and $\E_{k-1}^{(1)}$ denotes the prolongation
(locus of the derivatives of functions specifying $\E_{k-1}$).

Formal integrability is equivalent to the claim that all projections
$\pi_{k+1,k}:\E_{k+1}\to\E_k$ are submersions. Then we can define
$\E_\infty=\lim\limits_{\leftarrow}\E_k$.

Let $T$ be the model tangent space for independent variables and $N$
the model tangent space for dependent variables. Define the symbols of order
$k$ as $g_k=\op{Ker}(d\pi_{k,k-1}:T\E_k\to T\E_{k-1})\subset S^kT^*\otimes N$ and
let $g=\oplus g_k$ be the symbol bundle over $\E$.
For $k$ greater than the (maximal) order $l$ of $\E$ we can define
$g_k=S^{k-l}T^*\ot g_l\cap S^kT^*\ot N$.

The Spencer $\delta$-cohomology group $H^{*,*}(\E)$ is the cohomology of the complex
$g_*\ot\La^*(T^*)$ with symbolic de Rham differential over $\E$ \cite{S}.

The formal theory of differential equations identifies
obstructions to formal integrability (compatibility conditions) as certain elements
$W_k\in H^{k-1,2}(\E)$ (structural functions or Weyl tensors).
For geometric problems $H^{*,2}(\E)$ is the space of curvature/torsion tensors \cite{KL$_3$}.

Another way to look at this is to regard compatibility
conditions as differential syzygies, i.e. relations between generators of $\E$.

Denote generators of this system by ${\bf F}=\{F_i\}_{i=1}^r$, so that $\E=\{F_1[u]=0,\dots,F_r[u]=0\}$.
Then the symbol spaces can be expressed through the symbols of linearizations of these
(nonlinear) differential operators
(we will use the notation $\z$ for the symbol)
 $$
g=\op{Ker}\{\sigma(\ell_{F_1}),\dots,\sigma(\ell_{F_r})\}\subset
ST^*\otimes N.
 $$

It turns out that $g^*=\oplus g_k^*$ is an
$ST$-module, where the latter is viewed as the algebra of
polynomials on $T^*$. This is called the symbolic module
of the system $g^*=\mathcal{M}_\E$ (also known as the symbol module \cite{S}).

 \begin{dfn}
A differential operator $G$ from the left differential ideal $\langle{\bf F}\rangle$ is called
a differential syzygy for the system $\E$ if its symbol $\sigma(\ell_G)$ is an ordinary
(algebraic) syzygy for the symbolic module $\mathcal{M}_\E$.
 \end{dfn}

If $\E$ is linear we can consider $\langle{\bf F}\rangle$ as the
left module over the algebra $\op{Diff}(\1,\1)$ of scalar linear
differential operators (on the trivial rank 1 bundle $\1$ over $M$).
In the nonlinear case, this should be changed to the algebra
$\Cc\op{Diff}(\1,\1)=C^\infty(J^\infty\pi)\otimes_{C^\infty(M)}\op{Diff}(\1,\1)$
of $\Cc$-differential operators,
see \cite{KLV}: $\langle{\bf F}\rangle=\langle\Delta\circ F_i\,|\,\Delta\in\Cc\op{Diff}(\1,\1)\rangle$.

Thus to every differential syzygy there corresponds an algebraic syzygy, but
because the operator of taking symbol is not injective, the reverse problem
 $$
\text{syzygy}\stackrel{\mathfrak{q}}\rightsquigarrow\text{differential
syzygy},
 $$
is not uniquely solvable. Moreover it is not possible to construct $\mathfrak{q}$ as a homomorphism,
but we would like to have a map $\mathfrak{q}$, defined on a finite generating set of
syzygies, with nice algebraic properties (extension to the whole syzygy module is similar to quantization
theories). This will be done below for the class of (generalized) complete intersections.

Let $f_i=\sigma(\ell_{F_i})\in ST\otimes N^*$ be symbols of the
differential operators defining $\E$ and let $\sum g_if_i=0$ be a
syzygy, with $g_i\in ST$ being some polynomials on $T^*$ with
$k_i=\op{deg}(f_i)=\op{ord}(F_i)$.

Choose any $\Cc$-differential operators $G_i$ with $\z(G_i)=g_i$.
Then the corresponding differential syzygy has the form
 $$
\nabla=\sum G_i\circ F_i.
 $$
The order of this operator is $k-1$, where $k=\op{max}_i\{\op{ord}(G_i)+k_i\}$ is called the order
of the syzygy.

Non-uniqueness comes through the lower order terms in $G_i$. So the
class $\nabla\,\op{mod}\mathcal{J}_{k-1}({\bf F})$ is well defined,
where
 $$
\mathcal{J}_t({\bf F})=\{\sum Q_i\circ F_i:\op{ord}(Q_i)\le t-k_i\}.
 $$

Denote by $[S]$ the equivalence class $S\,\op{mod}\mathcal{J}_{k-1}({\bf
F})$ of the differential syzygy $S=\mathfrak{q}(s)$, where $k$ is
order of the algebraic syzygy $s$ (notice that this equivalence class is independent
of the choice of $\mathfrak{q}$ so far it is right-inverse to the symbol map $\z$).

 \begin{theorem}[\cite{KL$_4$}]\label{DS}
An overdetermined PDE system $\E$ is formally integrable iff
for a basis $\{s_j\}$ of algebraic syzygy the corresponding classes
of differential syzygies vanish $[S_j]=0$.
 \end{theorem}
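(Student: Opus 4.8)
The plan is to run the argument through the cohomological obstruction theory of formal integrability and then to match those obstructions, one for one, with the differential syzygy classes $[S_j]$. Recall from the formal theory (\cite{S}) that $\E$ is formally integrable if and only if all the structural functions $W_k\in H^{k-1,2}(\E)$ vanish, these being exactly the obstructions to the projections $\pi_{k+1,k}:\E_{k+1}\to\E_k$ being submersions. So it is enough to represent each $W_k$ by a class $[S_j]$ and to check that the two vanishing conditions agree.

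First I would invoke Spencer--Koszul duality for the symbolic module: the compatibility cohomology $\bigoplus_k H^{k-1,2}(\E)$ of the symbol $g$ is identified, degreewise, with the (minimal) first syzygies of $\mathcal{M}_\E=g^*$ over $ST$, the degree-$k$ syzygies matching $H^{k-1,2}(\E)$. Since $ST$ is Noetherian, the first syzygy module is finitely generated, so a basis $\{s_j\}$ of algebraic syzygies is finite and the total obstruction splits into the finitely many contributions indexed by the $s_j$; in every degree carrying no syzygy generator the group $H^{k-1,2}(\E)$ vanishes and so does $W_k$ automatically.

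The core step is to show that $[S_j]$ represents $W_{k_j}$. Writing $s_j:\sum_i g_if_i=0$ and choosing $\Cc$-differential operators $G_i$ with $\z(G_i)=g_i$, the syzygy relation annihilates the order-$k_j$ part of $\nabla_j=\sum_i G_i\circ F_i$, so $\nabla_j\in\langle{\bf F}\rangle$ is genuinely of order $\le k_j-1$. I would then verify that the leading symbol of $\nabla_j$ is $\delta$-closed and defines, through the duality above, a class in $H^{k_j-1,2}(\E)$ equal to the Weyl tensor $W_{k_j}$; here reduction modulo $\delta$-boundaries on the cohomological side corresponds precisely to reduction modulo $\mathcal{J}_{k_j-1}({\bf F})$ on the operator side, so the well-defined class $[S_j]$ (independent of $\mathfrak{q}$, as noted after the definition of $\mathcal{J}_t$) is exactly $W_{k_j}$. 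Granting this, $[S_j]=0$ for all $j$ is equivalent to $W_k=0$ for all $k$. The converse direction is then immediate: if $\E$ is formally integrable the order-$(k_j-1)$ consequence $\nabla_j$ is already a combination of the $F_i$ of the permitted orders, i.e. $\nabla_j\in\mathcal{J}_{k_j-1}({\bf F})$, whence $[S_j]=0$.

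It remains to see that a generating set suffices rather than the whole syzygy module. An arbitrary syzygy is an $ST$-combination $\sum_j h_j s_j$, and it lifts to $\sum_j H_j\circ S_j$ modulo $\mathcal{J}$ with $\z(H_j)=h_j$; since $\Cc$-differential composition tracks the $ST$-module structure up to lower-order terms absorbed into $\mathcal{J}$, the class of the lift is $\sum_j[H_j\circ S_j]$ and vanishes once every $[S_j]=0$. I expect the main obstacle to be exactly the core step: pinning down the Spencer--Koszul dictionary so that the cohomology class carried by $\nabla_j$ is \emph{equal} to $W_{k_j}$ and not merely related to it, and checking that the two notions of triviality --- modulo $\delta$-boundaries and modulo $\mathcal{J}_{k-1}({\bf F})$ --- correspond under the lift $\mathfrak{q}$.
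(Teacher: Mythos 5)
Your proposal follows essentially the same route as the paper: the paper's entire proof is the one-sentence observation that $H^{k-1,2}(\E)$ equals the graded second cohomology of a projective resolution of the symbolic module $\mathcal{M}_\E$, so that the Weyl-tensor obstructions $W_k$ are enumerated by a basis of algebraic syzygies --- exactly the Spencer--Koszul dictionary around which you build your argument. The details you add (finiteness via Noetherianity, vanishing of $H^{k-1,2}(\E)$ in degrees carrying no minimal syzygy generator, the correspondence of reduction modulo $\delta$-boundaries with reduction modulo $\mathcal{J}_{k-1}({\bf F})$, and the reduction of arbitrary syzygies to a generating set) are faithful expansions of what the paper leaves implicit and defers to \cite{KL$_4$}.
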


This follows from the fact that the Spencer
$\delta$-cohomology group $H^{k-1,2}(\E)$ equals the corresponding
graded second cohomology group for projective resolution of the
symbolic module $\mathcal{M}_\E$, i.e. it can be enumerated via a
basis of algebraic syzygies.

There are two basic approaches to construct the arrow $\mathfrak{q}$:

1. Construct differential syzygies successively in order $k$, i.e.
according to passage $\E_k\dashrightarrow\E_{k+1}$. This corresponds to
prolongation-projection approach having origin in \'E.\,Cartan's equivalence
method.

2. Successive identification of differential syzygies involving two operators
$F_i,F_j$, then involving three etc. This represents compatibility as Massey products
\cite{KL$_4$} and is related to deformation of the symbolic module $\mathcal{M}_\E$
(or to the corresponding noncommutative D-module $\E^*$).

We will elaborate the first idea for PDE systems with
nice characteristic variety $\op{Char}^\C(\E)=\{\xi\in \mathbb{P}^\C
T^*:\op{rank}[\z(\ell_F)(\xi)]<m=\dim N\}$.

\smallskip

Now I want to present the explicit form of compatibility conditions for
certain generic overdetermined systems of PDEs. We start with the scalar case $m=1$
(single $u$); $n=\dim M$ is arbitrary.

The Jacobi bracket $\{F,G\}$ of scalar (non-linear) differential
operators $F,G\in\op{diff}(\1,\1)$ is defined via the linearization
operator as follows:
 $$
\{F,G\}=\ell_F\,G-\ell_G\,F.
 $$
If $\op{ord}F=k$, $\op{ord}G=l$, then $\op{ord}\{F,G\}=k+l-1$.

Let $(x^i,u_\z)$, $1\le i\le n$, $\z=(i_1,\dots,i_n)$,
be the "canonical" coordinates on the jet-space,
$\D_i=\p_{x^i}+\sum u_{\z+1_i}\p_{u_\z}$ the total derivative
w.r.t. $x^i$ and  let $\D_\z=\D_1^{i_1}\dots\D_n^{i_n}$ be the
operator of higher total derivative. We can express the Jacobi bracket as
 $$
\{F,G\}= \sum \D_\z(F)\p_{u_\z}(G)-\D_\z(G)\p_{u_\z}(F).
 $$
We define the Mayer bracket of $F_i,F_j$ by the formula
 $$
[F_i,F_j]_\E=\{F_i,F_j\}\,\op{mod}\mathcal{J}_{k_i+k_j-1}({\bf F}).
 $$
For first order scalar operators these are respectively the classical
Lagrange and Mayer brackets.

 \begin{theorem}[\cite{KL$_1$}]\label{MB}
Consider a scalar system $\E\subset J^k(M)$ given by $r\le n$ differential equations
$F_1[u]=0,\dots,F_r[u]=0$, such that for each point $x_k\in\E$ the
characteristic varieties for the equations $F_i$ are jointly
transversal, i.e. $\op{codim}\bigl[\op{Char}^\C_{x_k}(\E)\subset
\mathbb{P}\left( T_x^*M\right)^\C\bigr]=r$.

Then the system is formally integrable iff all the Mayer brackets
vanish:
 $$
[F_i,F_j]_\E=0,\ 1\le i<j\le r.
 $$
 \end{theorem}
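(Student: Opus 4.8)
The plan is to derive the Mayer-bracket criterion from the differential-syzygy criterion of Theorem \ref{DS} by computing, under the transversality hypothesis, the algebraic syzygy module of $\mathcal{M}_\E$ explicitly and matching each of its generators with a Jacobi bracket. The argument thus splits into an algebraic part (identify the syzygies) and a bookkeeping part (identify the lifts $\mathfrak{q}(s_{ij})$ with the Mayer brackets).

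First I would translate the characteristic condition into commutative algebra. In the scalar case the symbols $f_i=\z(\ell_{F_i})$ are homogeneous polynomials of degree $k_i$ on $T^*$, and $\op{Char}^\C_{x_k}(\E)$ is their common projective zero locus. The hypothesis $\op{codim}[\op{Char}^\C_{x_k}(\E)]=r$ says that in the affine cone the $r$ hypersurfaces $\{f_i=0\}$ meet in codimension exactly $r$; equivalently $f_1,\dots,f_r$ is a regular sequence in $ST\cong\C[\xi_1,\dots,\xi_n]$ (whence also $r\le n$). This is precisely the complete-intersection situation singled out before the theorem. Then, for a regular sequence the Koszul complex on $(f_1,\dots,f_r)$ is acyclic and furnishes a free resolution of $\mathcal{M}_\E=ST/(f_1,\dots,f_r)$. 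Consequently the first syzygy module, the relations $\sum g_if_i=0$, is generated over $ST$ by the $\binom{r}{2}$ Koszul syzygies $s_{ij}=f_j\,e_i-f_i\,e_j$, $1\le i<j\le r$, with no further independent relations. These form the basis $\{s_j\}$ of algebraic syzygies to which Theorem \ref{DS} refers, so formal integrability is equivalent to the vanishing of the classes $[S_{ij}]$, $i<j$.

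Next I would identify $[S_{ij}]$ with the Mayer bracket. Lifting $s_{ij}$ through $\mathfrak{q}$ means choosing $\Cc$-operators of symbol $f_j$ and $-f_i$; the canonical choice $G_i=\ell_{F_j}$, $G_j=-\ell_{F_i}$ produces the differential syzygy $\nabla_{ij}=\ell_{F_j}\circ F_i-\ell_{F_i}\circ F_j$, which by the definition of the Jacobi bracket equals $-\{F_i,F_j\}$ and has order $k_i+k_j-1$. Its class modulo $\mathcal{J}_{k_i+k_j-1}({\bf F})$ is, by definition, $-[F_i,F_j]_\E$; any other lift differs by terms of $\mathcal{J}_{k_i+k_j-1}({\bf F})$, so $[S_{ij}]=-[F_i,F_j]_\E$ independently of $\mathfrak{q}$. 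Feeding this back into Theorem \ref{DS} gives the stated equivalence.

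The main obstacle is the passage to the Koszul-generated syzygy module: one must show that transversality genuinely forces a Koszul-generated (and not merely Koszul-containing) module, uniformly over $\E$. The pointwise codimension count gives a regular sequence at each $x_k$, and I would use the standing regularity assumption to promote this to the resolution over the whole equation, so that a single finite generating set of syzygies of length $r$ suffices. The delicate point is that transversality rules out embedded or excess-dimensional components of the characteristic variety, which otherwise would contribute extra, higher-order syzygies and hence additional compatibility conditions beyond the brackets — exactly the failure illustrated by \eqref{NoC}, where the number of equations exceeds the codimension of the characteristic variety.
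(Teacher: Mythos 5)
Your proposal is correct and takes essentially the same route the paper itself indicates for Theorem \ref{MB}: the transversality hypothesis makes $f_1,\dots,f_r$ a regular sequence, so the Koszul complex resolves the symbolic module $\mathcal{M}_\E$ and the algebraic syzygies are generated by the Koszul relations $s_{ij}$, whose lifts under $\mathfrak{q}$ are exactly the Jacobi brackets, and Theorem \ref{DS} then gives the Mayer-bracket criterion. Your sign bookkeeping $[S_{ij}]=-[F_i,F_j]_\E$ and your closing remark that excess-dimensional components of $\op{Char}^\C(\E)$ would create extra syzygies (as in system (\ref{NoC})) are both consistent with the paper's discussion.
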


The Koszul complex is the minimal resolution of the symbolic module $\mathcal{M}_\E$ for complete
intersections, whence the algebraic syzygies are generated by commutators.
Thus the arrow $\mathfrak{q}$ associates the higher Jacobi bracket $\{,\}$ to the commutator $[,]$
(and so can be treated as a quantization).

If the condition of complete intersection is violated, then the conclusion of the theorem could be wrong.
To see this let us re-visit the example from the Introduction.


 \begin{examp}\rm
For system (\ref{NoC}) the compatibility conditions
$[F_1,F_2]_\E=0$, $[F_2,F_3]_\E=0$, $[F_3,F_1]_\E=0$ should necessarily hold
($F_i$ denotes the difference of the left and right hand sides in the $i$-th equation of (\ref{NoC})),
but they do not form a basis in differential syzygy module
(and so are not sufficient for compatibility).
A basis is given by relation (\ref{Dsy}) and a similar relation
 $$
\D_0(F_1)+\D_1(F_2)+\D_2(F_3)\equiv0\mod\E.
 $$
Bracket-relations are differential corollaries of these two, which yield a basis
in the second cohomology group (counting compatibility conditions) $H^{1,2}(\E)=\R^2$.
It is easy to see that the system $\E$ is not a complete intersection:
the characteristic variety is the normal cubic
 \begin{eqnarray*}
\op{Char}^\C(\E)&=&\{\xi\in\C P^3\,|\,\xi_1\xi_3=\xi_2^2,\xi_1\xi_2=\xi_0\xi_3,\xi_0\xi_2=\xi_1^2\}\\
&=&\{[\l^3:\l^2:\l:1]\,|\,\l\in\bar\C=\C\cup\infty\}
 \end{eqnarray*}
and it cannot be given by $2=\op{codim}\op{Char}^\C(\E)$ equations.
 \end{examp}

We can however have other explicit formulae for scalar non-complete intersections.
Let us point out one example. The following statement can be proved similarly to
Theorem \ref{MB} (and it follows from Theorem \ref{DS}).


 \begin{theorem}
Consider the system
 $$
\E=\{F_i\circ G_j+\dots{}=0\,|\,i=1\dots r,j=1\dots s\}
 $$
where dots stand for the lower order terms.
It has reducible characteristic variety $\op{Char}^\C(\E)=\{\z(\ell_{F_i})=0\}\cup\{\z(\ell_{G_j})=0\}$,
and we suppose that the intersection $\{\z(\ell_{F_i})=0\}\cap\{\z(\ell_{G_j})=0\}$
has codimension $r+s$. 
Then if $k_i=\op{ord}(F_i)$, $l_j=\op{ord}(G_j)$, the compatibility conditions are
 $$
\{F_i,F_j\}G_k=0\,\op{mod}\mathcal{J}_{k_i+k_j+l_k-1}(\E),
F_i\{G_j,G_k\}=0\,\op{mod}\mathcal{J}_{k_i+l_j+l_k-1}(\E).
 $$
 \end{theorem}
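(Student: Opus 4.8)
The plan is to deduce everything from Theorem \ref{DS}: it suffices to produce a generating set of algebraic syzygies of the symbolic module $\mathcal{M}_\E$ and then compute, through the arrow $\mathfrak{q}$, the corresponding differential syzygy classes. Set $p_i=\z(\ell_{F_i})$ and $q_j=\z(\ell_{G_j})$, homogeneous of degrees $k_i,l_j$ in the polynomial ring $R=ST$. Since the operators are $F_i\circ G_j$ up to lower order, their leading symbols are the products $p_iq_j$, so $\mathcal{M}_\E$ is presented by the ideal $\mathcal{I}=(p_iq_j)=I\cdot J$ with $I=(p_1,\dots,p_r)$, $J=(q_1,\dots,q_s)$. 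The hypothesis $\op{codim}\bigl(\{p_i=0\}\cap\{q_j=0\}\bigr)=r+s$ says precisely that $p_1,\dots,p_r,q_1,\dots,q_s$ is a regular sequence; thus $I,J$ are complete intersections and are Tor-independent, which is the algebraic input I would exploit.

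To find the first syzygies of $\mathcal{I}$ I would factor the presentation $\phi\colon R^{rs}\to R$, $e_{ij}\mapsto p_iq_j$, as $\phi=\beta\alpha$ with $\alpha\colon R^{rs}\to R^{r}$, $e_{ij}\mapsto q_j\epsilon_i$, and $\beta\colon R^{r}\to R$, $\epsilon_i\mapsto p_i$. Then $\ker\alpha$ is, row by row, the syzygy module of $(q_1,\dots,q_s)$; as the $q_j$ form a regular sequence this is Koszul, and these are exactly the ``horizontal'' relations that will yield $F_i\{G_j,G_k\}$. Modulo $\ker\alpha$ the syzygies of $\phi$ map isomorphically onto $\op{Syz}(p)\cap\op{im}\alpha=\op{Syz}(p)\cap J^{r}$, where $\op{Syz}(p)$ is again Koszul, generated by $K_{ii'}=p_{i'}\epsilon_i-p_i\epsilon_{i'}$. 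The crux is the identity $\op{Syz}(p)\cap J^{r}=J\cdot\op{Syz}(p)$: if $v=\partial_2 w\in J^{r}$ then its reduction $\bar w$ lies in $\ker\bar\partial_2$ in the complex $K(p)\otimes R/J$, which has no higher homology since it computes $\op{Tor}^R_{>0}(R/I,R/J)=0$; hence $\bar w=\bar\partial_3\bar z$ and $v=\partial_2(w-\partial_3 z)\in J\cdot\op{Syz}(p)$. Each generator $q_kK_{ii'}$ then lifts to the ``vertical'' relation $p_{i'}(p_iq_k)-p_i(p_{i'}q_k)=0$. So the two Koszul families generate all first syzygies. This homological step — especially the $\op{Tor}_2$-vanishing that rules out extra minimal generators such as the rectangular relations $p_{i'}q_{j'}(p_iq_j)-p_iq_j(p_{i'}q_{j'})$ — is where transversality is indispensable, and I expect it to be the main obstacle.

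Finally I would quantize, exactly by the mechanism of Theorem \ref{MB} that sends a Koszul/commutator syzygy to the higher Jacobi bracket. The vertical relation, carrying the spectator factor $G_k$, has differential syzygy $\ell_{F_{i'}}\circ(F_i\circ G_k)-\ell_{F_i}\circ(F_{i'}\circ G_k)$, which modulo $\mathcal{J}_{k_i+k_j+l_k-1}(\E)$ collapses to the composition $\{F_i,F_j\}\,G_k$ of order $k_i+k_j+l_k-1$; symmetrically the horizontal relation, with common left factor $F_i$, quantizes to $F_i\,\{G_j,G_k\}$ modulo $\mathcal{J}_{k_i+l_j+l_k-1}(\E)$. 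By Theorem \ref{DS}, formal integrability is equivalent to the vanishing of all these classes, which is the asserted list of compatibility conditions. The one point requiring care is the bookkeeping in passing from the composition $\ell_{F_{i'}}\circ F_i\circ G_k$ to the bracket $\{F_i,F_j\}\circ G_k$ (and, for the other family, in moving the spectator $F_i$ to the left past the derivatives): as in the complete-intersection case, the reordering alters the operator only by terms already in $\mathcal{J}$, so the bracket expressions are well defined modulo the indicated orders.
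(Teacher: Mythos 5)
Your homological skeleton is sound and is exactly the route the paper gestures at (the paper offers no proof beyond the pointer to Theorems \ref{MB} and \ref{DS}): the factorization $\phi=\beta\alpha$, the identification of $\ker\alpha$ with the horizontal Koszul relations, and the key identity $\op{Syz}(p)\cap J^{\oplus r}=J\cdot\op{Syz}(p)$ via vanishing of $\op{Tor}^R_{>0}(R/I,R/J)$ are all correct; the codimension hypothesis does make $p_1,\dots,p_r,q_1,\dots,q_s$ a regular sequence in the Cohen--Macaulay ring $R$, and you correctly isolate this as the place where transversality enters. So the generators of the algebraic syzygy module are as you claim.

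The gap is in the final quantization step, and precisely at the point you flagged as mere ``bookkeeping.'' For the \emph{vertical} family nothing needs reordering: by associativity the spectator $G_k$ already sits on the right, so in the pure-composition case $F_{i'}\circ F_i\circ G_k-F_i\circ F_{i'}\circ G_k=[F_{i'},F_i]\circ G_k$ exactly. But for the \emph{horizontal} family, moving the spectator $F_i$ to the left is not free: already for linear operators $G_k\circ F_i\circ G_j-G_j\circ F_i\circ G_k=F_i\circ[G_k,G_j]+[G_k,F_i]\circ G_j-[G_j,F_i]\circ G_k$, and the mixed-bracket corrections have order exactly $k_i+l_j+l_k-1$ --- the same order as the class --- and are \emph{not} left multiples of the generators $H_{ab}=F_a\circ G_b+\dots$, hence not in $\mathcal{J}_{k_i+l_j+l_k-1}(\E)$ in general. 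Concretely, on $\R^3(x,y,z)$ take $F=\D_x$, $G_1=\D_y$, $G_2=\D_z+a(x)$ with pure compositions, i.e.\ $H_1=u_{xy}$, $H_2=u_{xz}+au_x+a'u$; the transversality hypothesis holds, $\{G_1,G_2\}=0$, so your condition $F\{G_1,G_2\}\equiv0$ is satisfied identically, yet the honest syzygy class is $\D_z(H_1)-\D_y(H_2)\equiv-a'u_y\ \op{mod}\mathcal{J}_{2}(\E)$, and the system is incompatible unless $a'=0$ (substituting $u=B(y)$ into $H_2=0$ forces $a'B=0$). The obstruction is exactly the mixed bracket $\{G_2,F\}=-a'u$ composed with $G_1$. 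So the assertion ``the reordering alters the operator only by terms already in $\mathcal{J}$'' is false, and the horizontal compatibility class equals $F_i\{G_k,G_j\}$ only up to the terms $\{G_k,F_i\}G_j-\{G_j,F_i\}G_k$ (plus contributions of the lower-order dots, which for the same reason cannot be dropped from either family); these must be carried along in the compatibility conditions, or shown to vanish under additional hypotheses. Note that this defect is inherited from the theorem's own loosely written display --- your syzygy computation shows the correct statement is the vanishing of the classes of the two families of lifts $\ell'_{i'}(H_{ik})-\ell'_i(H_{i'k})$ and $\Lambda_k(H_{ij})-\Lambda_j(H_{ik})$, of which the displayed brackets are only the leading, reordering-free parts.
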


Let us now consider vector systems of PDEs on $u=(u^1,\dots,u^m)$.

 \begin{dfn}
A system $\E\subset J^k(\pi)$ of PDEs $F_1[u]=0,\dots,F_r[u]=0$ is called a
generalized complete intersection if
 \begin{enumerate}
 \item $m<r\leq n+m-1$, where $n=\dim M$, $m=\op{rank}(\pi)$;
 \item The complex projective characteristic variety
$\op{Char}^\C(\E)\subset\mathbb{P}^\C T^*$ has codimension
$r-m+1$;
 \item The characteristic sheaf $\mathcal{K}$ over
$\op{Char}^\C(\E)$ has fibers of dimension $1$ everywhere.
\end{enumerate}
 \end{dfn}

Let us introduce a multi-bracket of linear (scalar) differential operators
$\nabla_i\in\op{Diff}(m\cdot\1,\1)$ by the formula
 $$
\{\nabla_1,\dots,\nabla_{m+1}\}=\sum_{k=1}^{m+1}(-1)^{k-1}
\op{Ndet}\bigl[\nabla_i^j\bigr]_{i\ne k}^{1\le j\le m}\cdot\nabla_k,
 $$
where $\op{Ndet}$ is a version of non-commutative determinant.
For non-linear differential operators $F_i\in\op{diff}(\pi,\1)$ the multi-bracket is
 \begin{multline*}
\hspace{-0.1in}\{F_{1},\dots,F_{m+1}\}=\\
\dfrac{1}{m!}\hspace{-0.1in}%
\sum_{\alpha\in\mathbf{S}_{m},\beta\in\mathbf{S}_{m+1}}\hspace{-0.1in}\left(
-1\right)  ^{\alpha}\left(  -1\right)
^{\beta}\ell_{\alpha(1)}(F_{\beta \left(  1\right)
})\circ\ldots\circ\ell_{\alpha(m)}(F_{\beta\left(  m\right) })\left(
F_{\beta(m+1)}\right).
 \end{multline*}

Then we define the reduced bracket
 $$
[F_{i_1},\dots,F_{i_{m+1}}]_{\E}=\{F_{i_1},\dots,F_{i_{m+1}}\}\,
\op{mod}\mathcal{J}_{k_{i_1}+\cdots+k_{i_{m+1}}-1}({\bf F}).
 $$

 \begin{theorem}[\cite{KL$_4$}]\label{multibr}
A system of generalized complete intersection type
 $$
\E\subset J^k(\pi)=\{F_1[u]=0,\dots,F_r[u]=0\}
 $$
is formally integrable iff all the multi-brackets vanish due to the system:
 $$
[F_{i_1},\dots,F_{i_{m+1}}]_{\E}=0.
 $$
 \end{theorem}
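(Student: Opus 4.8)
The plan is to derive the statement from the syzygy criterion of Theorem~\ref{DS}, by pinning down a basis of algebraic first syzygies for the symbolic module $\mathcal{M}_\E$ of a generalized complete intersection and computing the corresponding quantizations $\mathfrak q$. First I would analyze the pure algebra. Writing $f_i=\z(\ell_{F_i})=(f_i^1,\dots,f_i^m)\in ST\ot N^*$, the symbols assemble into an $r\times m$ matrix $[f_i^j]$ over the polynomial algebra $ST$. Condition (1) gives $r>m$, so the rank is generically $m$; condition (2) says the degeneracy locus $\op{Char}^\C(\E)$ has the expected determinantal codimension $r-m+1$; and condition (3) forces the rank to drop exactly to $m-1$ there, i.e. the characteristic sheaf $\mathcal K$ is a line bundle over $\op{Char}^\C(\E)$. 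These are precisely the hypotheses under which the Buchsbaum--Rim (Eagon--Northcott) complex is acyclic and yields the minimal free resolution of $\mathcal M_\E$. From its first syzygy term I would read off that the algebraic first syzygies are generated by determinantal relations attached to $(m+1)$-subsets: relabelling the chosen operators $1,\dots,m+1$, Laplace expansion of the $(m+1)\times(m+1)$ matrix obtained by repeating the $j$-th column gives, for every component $j$,
$$\sum_{k=1}^{m+1}(-1)^{k-1}\op{det}\bigl[f_i^{j'}\bigr]_{i\ne k}^{1\le j'\le m}\cdot f_k^{\,j}=0,$$
so that $s=\bigl((-1)^{k-1}\op{det}[f_i^{j'}]_{i\ne k}\bigr)_{k=1}^{m+1}$ is a syzygy, and these form a basis.

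Next I would match these syzygies with the multi-bracket through $\mathfrak q$. The quantization replaces each maximal minor $\op{det}[\,\cdot\,]$ by its noncommutative version $\op{Ndet}[\,\cdot\,]$ and each product by a composition, so that $\mathfrak q(s)=\sum_{k=1}^{m+1}(-1)^{k-1}\op{Ndet}[\ell_{F_i}^{j}]_{i\ne k}\cdot F_k$, which is exactly the antisymmetrized multi-bracket $\{F_1,\dots,F_{m+1}\}$. The substance of this step is the symbol identity $\z(\ell_{\{F_1,\dots,F_{m+1}\}})=s$: since the principal symbols obey the algebraic Laplace relation above, the top-order terms cancel, the operator drops order by one and lands in the left ideal $\langle{\bf F}\rangle$, so it is a genuine differential syzygy realizing $\mathfrak q(s)$.

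The residual ambiguity, both in the ordering inside $\op{Ndet}$ and in the choice of compositions, affects only lower-order terms, hence the class modulo $\mathcal J_{k_{i_1}+\dots+k_{i_{m+1}}-1}({\bf F})$ is well defined and coincides with the reduced bracket $[F_{i_1},\dots,F_{i_{m+1}}]_\E$. Theorem~\ref{DS} then closes the argument: formal integrability is equivalent to the vanishing of the classes $[S_j]$ over a basis of algebraic syzygies, and since the determinantal syzygies form such a basis with $[S_{i_1\dots i_{m+1}}]=[F_{i_1},\dots,F_{i_{m+1}}]_\E$, the system is formally integrable iff all multi-brackets vanish due to $\E$.

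The main obstacle I anticipate sits in the first two steps: establishing acyclicity of the Buchsbaum--Rim complex \emph{precisely} under conditions (2)--(3), so that the determinantal syzygies generate the whole first syzygy module and not merely a part of it, and then verifying that $\op{Ndet}$ is organized so that the leading symbol of the multi-bracket is exactly the commutative determinantal syzygy $s$ (this is where the line-bundle hypothesis on $\mathcal K$ is used to control the symbol). By contrast the ``only if'' direction is soft, since \emph{any} algebraic syzygy produces a vanishing class by Theorem~\ref{DS}; the force of the genericity conditions is needed only for the ``if'' direction.
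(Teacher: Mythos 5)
Your proposal is correct and follows essentially the same route the paper indicates: the theorem is imported from \cite{KL$_4$} without a written-out proof, but the paper's stated rationale --- Theorem \ref{DS} together with the remark that for generalized complete intersections the map $\mathfrak{q}$ associates the multi-brackets to determinantal identities, in exact parallel with the Koszul-resolution/Jacobi-bracket picture given for Theorem \ref{MB} --- is precisely your plan of resolving the symbolic module $\mathcal{M}_\E$ by the determinantal (Buchsbaum--Rim) complex, reading off the Laplace-type first syzygies, and quantizing them to the multi-brackets before invoking Theorem \ref{DS}. The two points you flag as delicate (acyclicity of the resolution under conditions (2)--(3), and organizing $\op{Ndet}$ so that the leading symbol of the multi-bracket reproduces the commutative determinantal syzygy) are exactly the technical content carried out in \cite{KL$_4$}.
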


For such systems $\E$ the map $\mathfrak{q}$ associates the multi-brackets of
differential operators to certain determinental identities, and these multi-brackets satisfy
the Pl\"ucker identity \cite{KL$_4$}.

\section{Symmetries and compatibility}\label{S2}

Consider a compatible system $\E=\{F_1=0,\dots,F_r=0\}$. Let $\mathcal{G}$ be a subalgebra
of the algebra $\op{sym}(\E)$ of classical or higher symmetries \cite{KLV},
written as another PDE system $\{S_1=0,\dots,S_k=0\}$.

Consider at first the scalar case, when $u$ is a single function.
Denote $k_i=\op{ord}(F_i)$, $l_j=\op{ord}(S_j)$. Then the symmetry condition is
 \begin{equation}\label{4r}
\{F_i,S_j\}=0\,\op{mod}\mathcal{J}_{k_i+l_j-1}(\E).
 \end{equation}
In addition we have from the symmetry condition that
 \begin{equation}\label{r4}
\{S_i,S_j\}=0\,\op{mod}\mathcal{J}_{l_i+l_j-1}(\mathcal{G}).
 \end{equation}
 \begin{theorem}\label{tre}
Assume that the joint system $\E+\mathcal{G}$: $\{F_i=0,S_j=0\}$ is a complete intersection,
i.e. its characteristic variety has codimension $r+k\le n$. Then this system is compatible.
 \end{theorem}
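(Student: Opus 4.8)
The plan is to reduce Theorem \ref{tre} to an application of Theorem \ref{MB}, the Mayer-bracket criterion for complete intersections. Since $\E+\mathcal{G}$ is by hypothesis a scalar ($m=1$) complete intersection with $r+k\le n$ equations $\{F_1,\dots,F_r,S_1,\dots,S_k\}$ whose characteristic varieties are jointly transversal (codimension $r+k$), Theorem \ref{MB} tells us that formal integrability of the joint system is equivalent to the vanishing of \emph{all} pairwise Mayer brackets. These brackets split into three families: the $[F_i,F_j]_{\E+\mathcal{G}}$, the $[F_i,S_j]_{\E+\mathcal{G}}$, and the $[S_i,S_j]_{\E+\mathcal{G}}$. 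The strategy is to show each family vanishes modulo the joint ideal, using separately the compatibility of $\E$, the symmetry condition (\ref{4r}), and the relation (\ref{r4}).

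First I would treat the mixed brackets $[F_i,S_j]$: these are governed precisely by the symmetry condition (\ref{4r}), which asserts $\{F_i,S_j\}=0\bmod\mathcal{J}_{k_i+l_j-1}(\E)$. Since $\mathcal{J}_{k_i+l_j-1}(\E)\subseteq\mathcal{J}_{k_i+l_j-1}({\bf F})$ for the enlarged generating set ${\bf F}=\{F_i,S_j\}$, the reduction modulo the larger ideal still gives zero, so $[F_i,S_j]_{\E+\mathcal{G}}=0$. Next, the $F$-$F$ brackets vanish because $\E$ itself is assumed compatible: by Theorem \ref{DS} (or directly, since the $F_i$ alone form a sub-complete-intersection) the classes of the associated differential syzygies vanish modulo $\langle{\bf F}\rangle$, and again enlarging the ideal can only help. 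Symmetrically, the $S$-$S$ brackets vanish by (\ref{r4}) together with the fact that $\mathcal{G}$, being a symmetry algebra, is itself closed under the bracket; here one uses $\mathcal{J}_{l_i+l_j-1}(\mathcal{G})\subseteq\mathcal{J}_{l_i+l_j-1}({\bf F})$.

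The main obstacle I anticipate is a subtlety of \emph{reduction modulo the correct ideal}. The hypotheses (\ref{4r}) and (\ref{r4}) give vanishing only modulo the \emph{smaller} ideals $\mathcal{J}(\E)$ and $\mathcal{J}(\mathcal{G})$ respectively, whereas Theorem \ref{MB} demands vanishing modulo $\mathcal{J}({\bf F})$ for the joint system. The passage is straightforward in the direction I described ($\mathcal{J}(\E),\mathcal{J}(\mathcal{G})\subseteq\mathcal{J}({\bf F})$), so the inclusion goes the favorable way and no genuine obstruction arises there. The more delicate point is verifying that the characteristic-variety hypothesis of Theorem \ref{MB} (joint transversality, codimension exactly $r+k$) really is the complete-intersection condition stated in the theorem, and that $r+k\le n$ guarantees the $F_i$ and $S_j$ together still satisfy the scalar multiplicity-one requirement underlying the Mayer-bracket formalism. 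One must check there are no hidden higher-order syzygies beyond the pairwise commutators; but this is exactly what the Koszul-resolution argument (cited after Theorem \ref{MB}) rules out for complete intersections, so the proof closes once joint transversality is confirmed.
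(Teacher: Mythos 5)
Your proof is correct and takes essentially the same route as the paper's own one-paragraph argument: apply Theorem \ref{MB} to the joint complete intersection $\E+\mathcal{G}$ and check the three families of Mayer brackets, using compatibility of $\E$ for $[F_i,F_j]$, the symmetry condition (\ref{4r}) for $[F_i,S_j]$, and relation (\ref{r4}) for $[S_i,S_j]$. Your explicit verification that the inclusions $\mathcal{J}(\E),\mathcal{J}(\mathcal{G})\subseteq\mathcal{J}({\bf F})$ go the favorable way only spells out a reduction the paper leaves implicit.
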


For $k=1$ this result coincides with Theorem 17 from \cite{KL$_1$}.

 \begin{proof}
Since the system is a complete intersection, Theorem \ref{MB} implies compatibility provided
that the Mayer brackets (on the joint system $\E+\mathcal{G}$) vanish. Compatibility of $\E$ yields
 $$
\{F_i,F_j\}=0\,\op{mod}\mathcal{J}_{k_i+k_j-1}(\E).
 $$
Vanishing of the other brackets is given by (\ref{4r}) and (\ref{r4}).
 \end{proof}

The condition of complete intersection is important. 


 \begin{examp}\rm
For the KdV equation $u_t=u\,u_x+u_{xxx}$ the algebra of classical symmetries is generated by:
$T_0=u_t$, $T_1=u_x$, $R=3tu_t+xu_x+2u$, $\Gamma=tu_x+1$. For every symmetry (one-dimensional
subalgebra) there are invariant solutions. Moreover the joint system KdV+\,symmetry is compatible, so the
space of invariant solutions had the expected dimension (1 for a symmetry $S\in\langle T_1,\Gamma\rangle$
and 3 for any other symmetry).

However there are no nontrivial invariant solutions for the following two-dimensional subalgebras of symmetries:
1) $\langle T_0,R\rangle$; 2) $\langle T_1,R\rangle$; 3) $\langle T_1,\Gamma\rangle$; 4) $\langle\Gamma,R\rangle$
(only zero solution for the joint system KdV+\,2\ sym\-met\-ries in the cases 1 \& 2 and nothing at all in 3 \& 4).
And indeed, in this case the joint system $\E+\mathcal{G}$ is not a complete intersection.

A similar observation on (non-)existence of invariant solutions applies to the infinite-dimensional
higher symmetry algebra  of the KdV.
 \end{examp}

Now let us discuss the general case, when the unknown $u$ is a vector-function.
Consider at first the following example of matrix linear differential operators:
 $$
A=\begin{pmatrix} \D_x\D_y & 0 \\ \D_x\D_y-\D_x & \D_x \end{pmatrix}\text{ and }
B=\begin{pmatrix} \D_x\D_y+\D_x & 0 \\ \D_x\D_y & \D_x \end{pmatrix}.
 $$
They commute $[A,B]=0$ and so do the inhomogeneous operators
 $$
F=A\cdot\begin{pmatrix} u \\ v\end{pmatrix}-\begin{pmatrix} 1 \\ 0\end{pmatrix},\ G=B\cdot\begin{pmatrix} u \\ v\end{pmatrix}\quad\Rightarrow\qquad \{F,G\}=0.
 $$
In other words $G\in\op{sym}(F)$. However the operators are incompatible.

We shall show that for two generic nonlinear differential operators the condition $G\in\op{sym}(F)$ implies compatibility of the joint system, and so existence of invariant solutions (in the amount given by the
usual formal calculus of dimensions \cite{C$_3$}).

This result was noticed by S.Igonin and A.Verbovetsky and a proof using a different method will appear in [IV].
We would like to give an explicit criterion for this compatibility. At first we notice the following
 \begin{lem}\label{Lem}
Consider a system $\E$ of $r\ge m$ differential equations for $m$ unknown functions.
Its characteristic variety $\op{Char}^\C(\E)$ has codimension $\le r-m+1$.
 \end{lem}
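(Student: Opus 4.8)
The plan is to recognize $\op{Char}^\C(\E)$ as a determinantal variety and then invoke the classical height bound for determinantal ideals. Write $\ell_F=(\ell_{F_1},\dots,\ell_{F_r})$ for the linearization of the system and fix a point of $\E$. The principal symbol $M(\xi):=\z(\ell_F)(\xi)$ is then an $r\times m$ matrix whose $i$-th row consists of the symbols $\z(\ell_{F_i})(\xi)$ acting on the components of $u$, i.e. homogeneous polynomials in $\xi\in T^*$ of degree $k_i=\op{ord}(F_i)$ (some entries possibly zero). By the very definition of the characteristic variety, a covector $\xi$ is characteristic exactly when $\op{rank}M(\xi)\le m-1$, that is, when $M(\xi)$ fails to have maximal rank $m$. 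Equivalently, $\op{Char}^\C(\E)$ is the common zero locus of all the maximal ($m\times m$) minors of $M(\xi)$, so it is a determinantal variety.

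First I would pass to the affine cone: since $\op{Char}^\C(\E)$ is conical, its codimension in $\mathbb{P}^\C T^*$ equals the codimension of the corresponding cone in $T^*\cong\C^n$, and I may compute in the polynomial ring $R=\C[\xi_1,\dots,\xi_n]$. Let $I=I_m(M)\subset R$ be the ideal generated by the maximal minors of the $r\times m$ matrix $M$ (recall $r\ge m$). The crux is the classical bound on the height of a determinantal ideal: for any matrix over a Noetherian ring, the ideal of its $t\times t$ minors has height at most $(r-t+1)(m-t+1)$ whenever it is a proper ideal. Taking $t=m$ gives $\op{ht}(I)\le(r-m+1)(m-m+1)=r-m+1$. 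Since $R$ is Cohen--Macaulay, $\op{grade}$ coincides with height, so the codimension of the zero locus $V(I)$ is at most $r-m+1$, and the same bound descends to $\op{Char}^\C(\E)$ in projective space.

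The only point that needs care is that the rows of $M$ carry different degrees $k_i$, so $M$ is not a matrix of forms of one fixed degree. This is harmless: the Macaulay/Eagon--Northcott height bound holds for an arbitrary matrix over a Noetherian ring and assumes no homogeneity, so it applies verbatim to the minors of $M$ in $R$. (If one prefers a homogeneous setup, one may multiply the rows by suitable powers of a fixed linear form to equalize degrees without changing the minor locus away from that form.) I expect this invocation of the determinantal height bound to be the substantive step; the remaining boundary case, where $I=R$ and $\op{Char}^\C(\E)=\varnothing$, is consistent with the stated inequality under the standing regularity assumption and requires no separate argument.
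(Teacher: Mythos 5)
Your proof is correct, but it takes a genuinely different route from the paper's. You recognize $\op{Char}^\C(\E)$ as the maximal-minor degeneracy locus of the $r\times m$ symbol matrix and invoke the classical Macaulay/Eagon--Northcott height bound $\op{ht}I_t(M)\le(r-t+1)(m-t+1)$ with $t=m$; the paper instead proves precisely this bound by hand, in the case at issue, by induction on the number of rows: writing $\Sigma_j$ for the degeneracy locus of the upper $j\times m$ submatrix $P_j$, it splits off the deeper stratum $\Sigma_j'$ where the rank drops below $m-1$, shows that away from $\Sigma_j'$ the passage from $\Sigma_j$ to $\Sigma_{j+1}$ is cut out locally by a single minor $\tilde\Delta_{j+1}=0$, while any component of $\Sigma_j'$ lying in $\Sigma_j$ passes to $\Sigma_{j+1}$ with unchanged codimension --- so each added row raises the codimension by at most one. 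Your argument is shorter and strictly more general (arbitrary Noetherian ring, no homogeneity), at the price of citing a nontrivial classical theorem as a black box; the paper's argument is self-contained and elementary, and is in essence the standard inductive proof of the maximal-minor case of that very theorem. Two small points in your write-up deserve correction, though neither creates a gap: the row-scaling device you offer is unnecessary, since each entry of row $i$ is already homogeneous of degree $k_i=\op{ord}(F_i)$, so every maximal minor is automatically homogeneous and the locus is conical; and the boundary case cannot literally be $I=R$, because $I$ is generated by homogeneous polynomials of positive degree and hence lies in the irrelevant ideal --- emptiness of the projective characteristic variety corresponds instead to $V(I)=\{0\}$, i.e. $\op{ht}I=n$, which your affine height bound still covers, forcing $n\le r-m+1$ in that case and matching the paper's convention $\dim\emptyset=-1$, under which the projective codimension is then $n$.
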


Notice that codimension cannot exceed $n$ since we adopt the convention $\dim\emptyset=-1$
and the ambient space satisfies $\dim\mathbb{P}^\C T^*=n-1$.

 \begin{proof}
The symbol of the system is a $r\times m$ matrix $P$ with polynomial entries, and the characteristic
variety is
 $$
\{\xi\in \mathbb{P}^\C T^*:\op{rank}(P(\xi))<m\}.
 $$
Denote by $\Delta_{i_1\dots i_m}$ the determinant of the $m\times m$ minor generated by rows $i_1,\dots,i_m$.
$\op{Char}^\C(\E)$ is given by the conditions $\Delta_{i_1\dots i_m}(\xi)=0$ for all
$i_1<\dots<i_m$, but this collection is excessive.

We can suppose that $\Delta_{1\dots m}\not\equiv0$ (if all the minors are degenerate there is nothing to prove).
For $j\ge m$ let us denote by $P_j$ the upper-row submatrix of $P$ of size $j\times m$.
By induction we can suppose that the set
$\Sigma_j=\{\xi\in \mathbb{P}^\C T^*:\op{rank}(P_j(\xi))<m\}$ has codimension $\le j-m+1$.
Consider the subset $\Sigma_j'$ where the above rank is $<m-1$.

If $\Sigma_j'$ has a component in $\Sigma_j$, then addition of a row to $P_j$ does not increase the rank over $m-1$,
and the codimension of $\Sigma_{j+1}$ containing this component is the same as for $\Sigma_j$.

Otherwise $\Sigma'_j\subset\Sigma_j$ has codimension at least 1 and in the complement the matrix $P_j(\xi)$
has rank $(m-1)$. Near every point $\xi$ we can choose $(m-1)\times m$ subminor of maximal rank.
Adding to it the row number $(j+1)$ we get a $m\times m$ matrix whose determinant we write as $\tilde\Delta_{j+1}$.
Then the defining relation for $\Sigma_{j+1}\subset\Sigma_j$ outside $\Sigma_j'$ is $\tilde\Delta_{j+1}=0$,
whence the relative codimension is 1.

Thus codimension of $\Sigma_{j+1}$ in the complex projective variety is at most $j-m$ and this gives the
induction step.
 \end{proof}

In particular, for $r=2m$ the codimension is at most (and generically for $m<n$ it is) $m+1$.
We shall refine this in the case, when the system $\E$ is a PDE coupled with its symmetry.

\smallskip

Consider the PDE system $F=G=0$, where $F,G\in\op{diff}(\pi,\pi)$ are nonlinear differential
operators from a rank $n$ bundle $\pi$ to itself and $G\in\op{sym}(F)$.
Denote the symbols of these operators by $P=\z(\ell_F)$, $Q=\z(\ell_G)$.
The symmetry condition $\{F,G\}=0\,\op{mod}F$ implies the following relation on
polynomial matrices $P,Q$ with some other polynomial matrix $K$
 \begin{equation}\label{comN}
PQ=KP.
 \end{equation}

 \begin{prop}\label{Pro}
For $m>1$ let $P$ and $Q$ be two homogeneous polynomial $m\times m$ matrices satisfying (\ref{comN}).
Then the characteristic variety
 $$
\op{Char}^\C=\left\{\xi\in\C P^{n-1}:\op{rank}\Bigl[{{P(\xi)}\atop{Q(\xi)}}\Bigr]<m\right\}
 $$
(characteristic variety of $\E:\{F=G=0\}$) has codimension $\le m$.
 \end{prop}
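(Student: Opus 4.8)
The plan is to exploit relation (\ref{comN}) to show that $Q$ preserves the kernel of $P$ pointwise, which collapses the rank condition defining $\op{Char}^\C$ into a single determinantal condition on the hypersurface $\{\det P=0\}$, and then to exhibit points of $\op{Char}^\C$ by a positivity argument.

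First I would record the pointwise consequence of (\ref{comN}). Fix $\xi$ and take $v\in\ker P(\xi)$. Applying the identity $P(\xi)Q(\xi)=K(\xi)P(\xi)$ to $v$ gives $P(\xi)\bigl(Q(\xi)v\bigr)=K(\xi)P(\xi)v=0$, so $Q(\xi)v\in\ker P(\xi)$. Thus $Q(\xi)$ restricts to an endomorphism $Q(\xi)|_{\ker P(\xi)}$, and the stacked matrix $\bigl[{P\atop Q}\bigr]$ drops rank at $\xi$ exactly when $\ker P(\xi)\cap\ker Q(\xi)\neq0$, i.e. when this restricted endomorphism is singular. In particular $\op{Char}^\C\subseteq V_P:=\{\det P=0\}$.

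Next I would split on $\det P$. If $\det P\equiv0$ then $P$ is degenerate everywhere; stratifying by $\op{corank}P$, on a dense open subset of each stratum $\ker P$ is a vector bundle of positive rank and $\op{Char}^\C$ is cut out by the single equation $\det(Q|_{\ker P})=0$, so its codimension is $\le1\le m$. In the main case $\det P\not\equiv0$ the hypersurface $V_P$ has codimension $1$; on its dense stratum where $\op{rank}P=m-1$ the kernel is a line and $Q$ acts there by a scalar $\lambda(\xi)$, so $\op{Char}^\C$ coincides with $\{\lambda=0\}$. Since $Q$ is homogeneous of order $\op{ord}(Q)>0$, the restricted determinant $\lambda$ (or $\det(Q|_{\ker P})$ on a deeper stratum) is homogeneous of positive degree, i.e. a nonzero section of an ample line bundle on a positive-dimensional component of $V_P$. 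Such a section vanishes on a divisor of that component, so $\op{Char}^\C$ either contains $V_P$ (when $\lambda\equiv0$) or has a component of codimension $2$; either way its codimension is $\le 2\le m$, using $m>1$.

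The step I expect to be the main obstacle is the existence/positivity part: the preceding paragraph without it only yields the inclusion $\op{Char}^\C\subseteq V_P$, whereas an upper bound on codimension requires that $\op{Char}^\C$ be nonempty and large, hence that the determinantal section genuinely vanishes. The clean way is to restrict to an irreducible curve $\Gamma\subseteq V_P$, pass to its normalization, and invoke that a section of a positive-degree line bundle on a smooth projective curve has a zero; the finitely many points where $\ker P$ fails to be locally free, the low-dimensional cases ($n\le2$), and the degenerate possibility $\op{ord}(Q)=0$ then require separate but routine handling. Finally I would remark that this argument in fact delivers the sharper bound $\op{codim}\op{Char}^\C\le2$, which is tight in the commuting examples $P=\op{diag}(x_1,\dots,x_m)$, $Q=\op{diag}(a_1,\dots,a_m)$, whose characteristic variety is a union of $m$ linear subspaces of codimension $2$.
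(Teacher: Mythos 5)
Your proposal is correct in outline, but it takes a genuinely different route from the paper's proof, so let me compare. The paper argues with \emph{rows}: from (\ref{comN}) it derives $\sum_k p_{ik}R_k(Q)\in\langle R_1(P),\dots,R_m(P)\rangle$, splits $\Sigma=\{\det P=0\}$ into the locus where some entry $p_{ik}\ne0$ and the locus $\Sigma_0=\{P(\xi)=0\}$, and on the first part imposes the $m-1$ conditions $R_j(Q)\in\langle R_1(P),\dots,R_m(P)\rangle$, $j\ne k$ (the $k$-th row comes for free from the symmetry relation), while on $\Sigma_0$ it adds the single equation $\det Q=0$; in either branch it exhibits inside $\op{Char}^\C$ a subvariety cut by at most $m$ positive-degree equations, whence codimension $\le m$. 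You instead dualize to \emph{kernels}: $Q(\xi)$ preserves $\ker P(\xi)$, and your characterization $\op{rank}\Bigl[{P(\xi)\atop Q(\xi)}\Bigr]<m\Leftrightarrow\ker\bigl(Q(\xi)|_{\ker P(\xi)}\bigr)\ne0$ is \emph{exact}, whereas the paper's row conditions are merely sufficient for the rank drop; this is why you need only one determinantal equation per corank stratum instead of $m-1$, and arrive (modulo the positivity step) at the sharper bound $\op{codim}\le2$. What you pay for this is exactly what you flagged: the paper gets nonemptiness essentially for free from the projective dimension theorem (positive-degree equations on projective varieties), while your exact characterization forces you to prove that the restricted determinant genuinely vanishes; your curve-normalization patch is the right one (saturate $\ker P$ to a rank-$c$ subbundle on the normalized curve, note $Q$ preserves it and $P$ annihilates it by continuity, and $\det(Q|_{\ker P})$ is then a section of a line bundle of positive degree $c\cdot\op{ord}(G)\cdot\deg\Gamma$, hence has a zero), after which a generic-slicing argument upgrades ``every curve in $\{\det P=0\}$ meets $\op{Char}^\C$'' to the codimension bound. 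Two repairs are needed in your write-up. First, the corank-$1$ stratum need not be dense in $\{\det P=0\}$ when $\det P$ has multiple factors (e.g.\ $P=\op{diag}(p,p,r)$ on the component $\{p=0\}$), so the ``scalar $\lambda$'' picture must be replaced throughout by the restricted determinant on the generic-corank stratum of each irreducible component --- your parenthetical shows you know this, but the main-case phrasing as stated is wrong. Second, $\op{ord}(Q)=0$ is not ``routine'': a constant invertible $Q$ (take $Q=\1$) makes $\op{Char}^\C$ empty, and for $m<n$ the bound then fails, so positivity of the orders is a genuine standing hypothesis (implicit in the paper as well, since $P,Q$ are symbols of a PDE and its symmetry), while the cases $n\le2$ are absorbed by the paper's convention $\dim\emptyset=-1$. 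Finally, your sharper bound $\op{codim}\le2$ appears to be correct and is consistent with everything in the paper --- the Kadomtsev--Pogutse and nonlinear-wave examples both have codimension $2$, and the paper itself records codimension $2$ in the diagonalizable-symbol theorem --- but note its consequence: the hypothesis $\op{codim}\op{Char}^\C(\E)=m$ of Theorem \ref{thM8} would then be attainable only for $m=2$.
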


 \begin{proof}
Denoting by $R_i(A)$ the row of matrix $A$, we have from the symmetry condition:
 \begin{equation}\label{poi}
\sum_{k=1}^m p_{ik}R_k(Q)\in\langle R_1(P),\dots,R_m(P)\rangle\quad \forall i=1,\dots,m.
 \end{equation}
We can suppose that $\det P(\xi)\not\equiv0$ (otherwise the claim follows from the Lemma), so that
the equation $\det P(\xi)=0$ determines a subvariety $\Sigma\subset \mathbb{P}^\C T^*$
of codimension 1.

Let $\Sigma_0\subset\Sigma$ be given by the equation $P(\xi)=0$.
First let us study the points $\xi\in\Sigma\setminus\Sigma_0$.
At such $\xi$ there exists an entry $p_{ik}\ne0$. Then we write the conditions
$R_j(Q)\in\langle R_1(P),\dots,R_m(P)\rangle$ for $j\ne k$. These are no more than
$m-1$ equations and so they specify a subvariety $\mathcal{K}\subset\Sigma$ in a neighborhood of $\xi$
on which also $R_k(Q)\in\langle R_1(P),\dots,R_m(P)\rangle$ by (\ref{poi}).
Thus this $\mathcal{K}$ is a part of $\op{Char}^\C$ of codimension $\le m$.

Now let us consider $\Sigma_0$. If it has codimension $\ge m$, it is negligible
or is a part of $\op{Char}^\C$ by the above argument.
But if its codimension is $<m$, then we must add the condition $\det Q(\xi)=0$ specifying
a subvariety $\mathcal{K}_0\subset\Sigma_0$ of codimension $\le1$. Since $P(\xi)=0$
for $\xi\in\Sigma_0$ the rank of the matrix $\Bigl[{{P(\xi)}\atop{Q(\xi)}}\Bigr]$ is $<m$.
Thus this $\mathcal{K}_0$ is a part of $\op{Char}^\C$ of codimension $\le m$.
 \end{proof}

This Proposition is important, so we would like to indicate an idea behind an alternative proof.
It will be shown later that condition (\ref{comN}) can be changed
to $[P,Q]=0$ without loss of generality, so we adopt this condition.

By Gerstenhaber theorem \cite{Ge} every pair of numeric (complex) commuting matrices $P,Q$
is contained in a commutative algebra (with $\1$) of dimension $m$.
If one of them has simple spectrum, say $P$, then by Cayley-Hamilton theorem this algebra
is generated by $\{P^i\}_{i=0}^{m-1}$ \cite{Z}.

This being generalized to matrices with polynomial entries, would imply
existence of polynomial matrices
$Z_0,Z_1,\dots,Z_{m-1}$ and the scalar polynomials $a_i,b_j$ such that
 $$
P=a_0Z_0+a_1Z_1+\dots+a_{m-1}Z_{m-1},\ \ Q=b_0Z_0+b_1Z_1+\dots+b_{m-1}Z_{m-1}.
 $$
Therefore one of the strata of the characteristic variety is given by the condition that the vector $(a_0,\dots,a_{m-1})$ is parallel to $(b_0,\dots,b_{m-1})$ ($m-1$ equations) plus one equation
$\det P=0$ (or $\det Q=0$), implying $\op{codim}\op{Char}^\C\le m$.

Let us demonstrate this in the case $m=2$.
Since identity matrices commute with everything, we can
subtract a multiple of them to make $p_{22}=q_{22}=0$. Then commutativity $[P,Q]=0$ yields:
 $$
\Bigl[\begin{pmatrix} p_{11} & p_{12} \\ p_{21} & 0 \end{pmatrix},
\begin{pmatrix} q_{11} & q_{12} \\ q_{21} & 0 \end{pmatrix}\Bigr]=0\quad
\Rightarrow\ \frac{p_{11}}{q_{11}}=\frac{p_{12}}{q_{12}}=\frac{p_{21}}{q_{21}},
 $$
which means that these matrices are proportional to some polynomial matrix.
Indeed, the first equality gives $p_{11}=r_1s_1$, $q_{11}=r_1s_2$, $p_{12}=r_2s_1$, $q_{12}=r_2s_2$
for some polynomials $r_i,s_i$.

Substituting this into the second equality we get
refining: $s_1=u_1v_1$, $s_2=u_1v_2$, $p_{21}=u_2v_1$, $q_{21}=u_2v_2$,
whence the claim $P=a_0\1+a_1Z$, $Q=b_0\1+b_1Z$.

\medskip

Let us return to sufficient conditions for compatibility of symmetries.

From what was shown above it follows that in the non-scalar case $\E=\{F=G=0\}$ is
never a generalized complete intersection (so compatibility cannot be deduced on the basis
of Theorem \ref{multibr}).
This is a consequence of non-commutativity of the matrix algebra;
another feature of the algebra of matrix differential operators ($m>1$) is that generically
$\op{ord}\{F,G\}=\op{ord}(F)+\op{ord}(G)$.

 \begin{theorem}\label{thM8}
Let $F=0$ be a determined PDE and $G=0$ its symmetry (both $m\times m$ systems).
If for the joint system $\E$: $F=G=0$ the variety $\op{Char}^\C(\E)$ has codimension $m$, then
$\E$ is compatible.
 \end{theorem}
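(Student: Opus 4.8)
The plan is to reduce formal integrability of the joint system $\E=\{F=G=0\}$ to vanishing of a single differential syzygy, and then to exhibit that syzygy explicitly using the symmetry condition. Since Theorem \ref{thM8} concerns a determined $m\times m$ system $F=0$ together with its symmetry $G=0$, Proposition \ref{Pro} tells us that $\op{Char}^\C(\E)$ has codimension $\le m$; the hypothesis pins it down to exactly $m$. By Lemma \ref{Lem} applied with $r=2m$ this is the generic (maximal) value, so the characteristic sheaf is as small as possible. First I would analyze the structure of the symbolic module $\mathcal{M}_\E$ under this codimension-$m$ condition: because the codimension of the characteristic variety equals $r-m+1=m+1$ would be needed for a generalized complete intersection, and we are told (in the paragraph preceding the theorem) that $\E$ is \emph{never} a generalized complete intersection, the resolution of $\mathcal{M}_\E$ is not Koszul and Theorem \ref{multibr} does not apply directly. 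So the route via Theorem \ref{DS} is the one to take: identify a basis of the algebraic syzygy module and check that the corresponding classes $[S_j]$ of differential syzygies vanish.

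The key step is to produce the algebraic syzygies from the matrix relation (\ref{comN}), $PQ=KP$. Writing the symbol of the joint operator as the $2m\times m$ matrix $\left[{P\atop Q}\right]$, the relation $PQ-KP=0$ says precisely that the rows of $Q$, after multiplication by $P$, fall back into the row span of $P$; this is the content of (\ref{poi}). I would use this to write down, for each $i$, a relation of the form $\sum_k p_{ik}R_k(Q)=\sum_k \kappa_{ik}R_k(P)$, which is an honest algebraic syzygy among the $2m$ generators (the rows of $P$ and of $Q$). The codimension-$m$ hypothesis should guarantee, via the minimal free resolution of a codimension-$m$ determinantal-type module, that these syzygies (together with the trivial Koszul relations among the $P$-rows, which vanish because $F$ is itself a determined operator whose compatibility is automatic) generate the whole first syzygy module. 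The next step is to lift each generating algebraic syzygy through the arrow $\mathfrak{q}$ to a differential syzygy: $\mathfrak{q}$ sends the commutator-type relation built from (\ref{comN}) to the Jacobi/Mayer bracket $\{F,G\}$, and by the symmetry condition $\{F,G\}=0 \mod \mathcal{J}_{k+l-1}(\E)$ this differential syzygy already lies in $\mathcal{J}$, i.e. $[S]=0$. Thus every generating class vanishes and Theorem \ref{DS} yields formal integrability.

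The hard part will be justifying that the syzygies coming from (\ref{comN}) genuinely \emph{generate} the syzygy module, rather than merely belonging to it. In the scalar/complete-intersection world this is automatic from the Koszul resolution, but here the module is not a complete intersection, so I must invoke the precise homological structure forced by codimension exactly $m$. The alternative picture sketched after Proposition \ref{Pro} — that, modulo reducing (\ref{comN}) to $[P,Q]=0$, the commuting pair $P,Q$ lies in a rank-$m$ commutative algebra generated (when $P$ has simple spectrum) by powers $\{P^i\}$ via Cayley--Hamilton and Gerstenhaber's theorem \cite{Ge} — is exactly what controls this: it says the characteristic variety splits into strata indexed by the parallelism locus of the coefficient vectors $(a_i)$ and $(b_j)$, and these strata are cut out by the expected number of equations. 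I would use that description to count second cohomology $H^{k-1,2}(\E)$ and match it against the number of bracket relations produced, confirming there are no extra hidden syzygies. The subtlety is that the passage from numeric commuting matrices to \emph{polynomial} matrices $P,Q$ need not preserve simplicity of spectrum generically, so I expect the genericity built into the codimension-$m$ hypothesis to be precisely what is needed to make the Cayley--Hamilton generation argument valid on a dense open stratum, with the lower strata handled by the dimension count in Lemma \ref{Lem}. Establishing this cleanly, and confirming that the lift $\mathfrak{q}$ of these determinantal syzygies lands in $\mathcal{J}_{k-1}$ by the symmetry condition, is where the real work lies.
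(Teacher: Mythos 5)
There is a genuine gap, and you have located it yourself: your argument hinges on the claim that the syzygies extracted from the symmetry relation $PQ=KP$ \emph{generate} the whole syzygy module $\op{Syz}(P,Q)$, and you never establish this. The tools you propose for it do not work in this setting. The Eagon--Northcott-type ``minimal free resolution of a codimension-$m$ determinantal module'' applies to generic maximal minors, where the codimension is $r-m+1=m+1$ --- which is exactly the generalized complete intersection situation that the paper explicitly rules out here (codimension is only $m$ by Proposition \ref{Pro}, precisely because of the symmetry relation). The Gerstenhaber/Cayley--Hamilton picture you invoke is presented in the paper only as a heuristic ``idea behind an alternative proof'' of Proposition \ref{Pro}, stated conditionally (``this \emph{being generalized} to matrices with polynomial entries, would imply\dots''); it is not a proven statement over polynomial rings, and building the main theorem on it leaves the generation step unsupported. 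A smaller slip: Lemma \ref{Lem} with $r=2m$ gives codimension at most $m+1$, so codimension $m$ is \emph{not} the maximal value from that lemma; it is maximal only among systems already constrained by the symmetry relation, via Proposition \ref{Pro}.

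The paper's proof sidesteps all of this homological algebra by a normal-form reduction that you do not attempt. One rewrites the system in first-order evolutionary form $F[u]=u_t-F_0[u]$ (possible since $F$ is determined) and substitutes $u_t=F_0$ into $G$, so that $P=\tau\,\1+P_0$ with $P_0$ and $Q$ free of $\tau$; the relation $PQ=KP$ then forces $K=Q$, i.e.\ $[P,Q]=0$. The codimension-$m$ hypothesis is used at exactly one point, and not the one you use it for: if $\det Q\equiv0$, rerunning the argument of Lemma/Proposition \ref{Pro} starting from the identically satisfied equation $\det Q=0$ would cut $\op{Char}^\C(\E)$ out by one fewer equation, giving codimension $<m$ --- contradiction. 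Hence the reduced $G$ is itself determined (so $G=0$ is compatible), and the $\tau$-linear structure of $P$ together with the absence of $\tau$ in $Q$ shows by an elementary degree-in-$\tau$ count that the commutation relation is the \emph{only} syzygy of $P=Q=0$; Theorem \ref{DS} then yields compatibility, with the higher-order case handled by reduction to first order (which preserves $\op{Char}^\C(\E)$, symmetry and compatibility). In short: where you try to control the full syzygy module abstractly and cannot, the paper changes coordinates so that the syzygy module becomes transparently principal, generated by the symmetry condition $\D_t(G)|_{u_t=F_0[u]}=0$.
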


The idea is that since this is the maximal possible codimension, there is no space for another
syzygy except for the symmetry relation.

 \begin{proof}
Let us first give the proof in a particular case, when both $F$ and $G$ are of the first order.
Since the equation $F=0$ is determined ($\det P$ is not identically zero),
we can write it in the evolutionary form.

Thus $F[u]=u_t-F_0[u]$, where $F_0$ does not involve $u_t$ terms but contains some other derivatives of $u$
($u$ is a vector function with $m$ components). We can substitute $u_t=F_0$ into $G=0$ and get a PDE
that is free of $u_t$ terms. We continue to write $G$ for this new operator.

Denoting the symbols of $F$ and $G$ by $P$ and $Q$, the symmetry condition is $PQ=KP$ for some
polynomial matrix $K$. Since $P=\t\1+P_0$, where $\t$ stands for the symbol of $\D_t$ differentiation
and $P_0$ (as well as $Q$) is free of $\t$, we conclude that $K=Q$, i.e. $[P,Q]=0$.

We claim that $G$ is also determined, i.e. $\det Q\not\equiv0$.
Otherwise the characteristic equation is given by $m-1$ equations at generic point.
Indeed, the proof of Lemma/Proposition \ref{Pro}
can be rewritten to start with equation $\det Q=0$,
which is identically true and so the number of defining relations for the variety
$\{\xi:\op{rank}\Bigl[{{P}(\xi)\atop{Q}(\xi)}\Bigr]<m\}$ is by 1 less than in the general case.
This yields $\op{codim}\op{Char}^\C(\E)<m$ and so contradicts the assumptions.

Thus the system $G=0$ is compatible (as well as the system $F=0$). Due to evolutionary form
of $P$ (and absence of $\t$ in $Q$) there are no syzygies of the system $P=Q=0$ besides the
commutation relation. So the claim follows from Theorem \ref{DS}.

Now let us consider the general case, when the orders can exceed 1. Let us re-write the system
in the 1st order evolutionary form. We keep the same letters $F$ and $G$ for the differential operators.
This invertible transformation does not affect the symmetry or compatibility properties and it preserves $\op{Char}^\C(\E)$, but it increases $m$.

This increase is composed of introduction of new variables
and re-writing the operators $F$ and $G$ via them. The first type equations (including equalities of
mixed derivatives) have no influence on the characteristic variety and when coupled with new $F$ they give
the same codimension 1 sub-variety in $\C P^{n-1}$ as the old $F$. The new equation $G$ (re-written
through the new variables and not involving $t$-derivatives) still consists of $m$ equations, and by
the same argument as above it is determined. The symmetry condition written as $\D_t(G)|_{u_t=F_0[u]}=0$
(now $u$ consists of old and new dependent functions) is the only differential syzygy,
and we conclude that $\E=\{F=G=0\}$ is compatible.
 \end{proof}


 \begin{examp}\rm
Consider the Kadomtsev-Pogutse equation (which is a simplification of the MHD system):
 \begin{eqnarray*}
F_1&=&\psi_t+\vp_x\psi_y-\vp_y\psi_x-\vp_z=0,\\
F_2&=&\vp_{xxt}+\vp_{yyt}+\vp_x\vp_{xxy}+\vp_x\vp_{yyy}-\vp_y\vp_{xxx}-\vp_y\vp_{xyy}-\\
   &&-\psi_{xxz}-\psi_{yyz}-\psi_x\psi_{xxy}-\psi_x\psi_{yyy}+\psi_y\psi_{xxx}+\psi_y\psi_{xyy}=0.
 \end{eqnarray*}
Its symmetries consist of 7 functional families and 2 additional fields, see \cite{KV}.
For instance here is the first family:
 \begin{eqnarray*}
\mathcal{A}_1(\a)&=&\a'\cdot(\vp+\psi)+\a\cdot(\vp_z+\vp_t),\\
\mathcal{A}_2(\a)&=&\a'\cdot(\vp+\psi)+\a\cdot(\psi_z+\psi_t),
 \end{eqnarray*}
where $\a=\a(z+t)$. The characteristic variety of the system $\E=\{F=\mathcal{A}=0\}$
has codimension $m=2$,
 \begin{multline*}
\op{Char}^\C(\E)=\{[p_x:p_y:p_z:p_t]\in\C P^3\,:\,p_z+p_t=0\,\&\\
[\,p_x^2+p_y^2=0\vee (p_z+\psi_x p_y-\psi_y p_x)^2-(p_t+\vp_x p_y-\vp_y p_x)^2=0\,]\}.
 \end{multline*}
Thus by Theorem \ref{thM8} the system $\E$ is compatible, and so gives rise to
invariant solutions (see \cite{KV} and references therein for the exact solutions of the
Kadomtsev-Pogutse equation arising via symmetry methods).
 \end{examp}


As we see from the above proof it is not necessary to require that $\op{codim}\op{Char}^\C(\E)=m$.
What we want to achieve is that the operator $G$, when $F$ is written in evolutionary
form and $\D_t$ derivatives are removed from $G$, is determined. This is clearly a
generic property.

Let us now give a sufficient condition in a special case
when the symbol $P=\z(F)$ ($m\times m$ matrix with polynomial
entries) can be diagonalized via an invertible transformation over polynomials.
As will be seen from the proof, $\op{codim}\op{Char}^\C(\E)=2$
and so this case does not follow from the above theorem (if $m>2$).

In the proof we will calculate the $\op{Mat}_{m\times m}(ST)$-module
 $$
\op{Syz}(P,Q)=\{(A,B)\in\op{Mat}_{m\times 2m}(ST):AP+BQ=0\}.
 $$
It encodes all syzygies which are the symbols of differential syzygies, and the latter
are the compatibility conditions by Theorem \ref{DS}.

 \begin{theorem}
Assume that $P=\z(\ell_F)$ is diagonalizable over the algebra $ST$. Assume also that
the characteristic variety of $F=0$ has codimension 1 (i.e. $F$ is determined)
and has no multiple components.

Let $G\in\op{sym}(F)$ be a symmetry (both $F,G$ are $m\times m$ PDE systems). If
for the joint system $\E:F=G=0$ the characteristic variety has codimension $>1$, then $\E$ is compatible.
 \end{theorem}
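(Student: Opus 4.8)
The plan is to follow the template of Theorem \ref{thM8}, reducing everything to a computation of the syzygy module $\op{Syz}(P,Q)$ and then invoking Theorem \ref{DS}. First I would normalize the situation: since $F=0$ is determined, I reduce to first-order evolutionary form $F[u]=u_t-F_0[u]$, substitute into $G$ to eliminate $\D_t$-derivatives, and (as in the earlier theorem) deduce from the symmetry condition $PQ=KP$ together with $P=\t\1+P_0$ that $K=Q$, i.e.\ $[P,Q]=0$. The new hypothesis, absent in Theorem \ref{thM8}, is that $P$ is diagonalizable over $ST$: there is an invertible $S\in\op{Mat}_{m\times m}(ST)$ with $S^{-1}PS=\op{diag}(\lambda_1,\dots,\lambda_m)$, where the $\lambda_i$ are the (polynomial) eigenvalues. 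Because $\op{Char}^\C(F=0)=\{\det P=0\}=\{\prod\lambda_i=0\}$ has codimension $1$ with no multiple components, the hypersurfaces $\{\lambda_i=0\}$ are distinct and reduced, so the $\lambda_i$ are pairwise coprime polynomials.

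Next I would transport the commutation relation through $S$. Since $[P,Q]=0$ and $P$ has the distinct diagonalizing eigenvalues $\lambda_i$, conjugating by $S$ should force $\tilde Q:=S^{-1}QS$ to commute with a diagonal matrix whose diagonal entries $\lambda_i$ are pairwise coprime; away from the locus where two eigenvalues coincide this forces $\tilde Q$ to be diagonal as well, $\tilde Q=\op{diag}(\mu_1,\dots,\mu_m)$. The point is then to compute $\op{Syz}(P,Q)=\op{Syz}(\tilde P,\tilde Q)$ in the diagonalized coordinates. For a single column-pair $(\lambda_i,\mu_i)$ the syzygy module of the $2\times1$ matrix is generated by the Koszul relation $(\mu_i,-\lambda_i)$, since $\lambda_i,\mu_i$ are coprime wherever $\lambda_i$ is reduced; assembling these over $i=1,\dots,m$ I expect $\op{Syz}(\tilde P,\tilde Q)$ to be generated by exactly the diagonal Koszul syzygies, and the single global syzygy $\tilde Q\cdot\tilde P-\tilde P\cdot\tilde Q=0$ descends from the commutation relation. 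The hypothesis $\op{codim}\op{Char}^\C(\E)>1$ guarantees the $\{\lambda_i=\mu_i=0\}$ are all proper, so no extra low-codimension syzygies appear.

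The heart of the argument is that these algebraic syzygies are all accounted for by the \emph{single} differential syzygy coming from the symmetry condition, namely $\D_t(G)|_{u_t=F_0[u]}\equiv0\bmod\E$, together with the trivial (Koszul/commutator) relations, so that every basis syzygy yields a vanishing class $[S_j]=0$; Theorem \ref{DS} then delivers formal integrability. The main obstacle I anticipate is the diagonalization step over the non-field ring $ST$: conjugating by an $S$ that is invertible only over the fraction field $\C(T^*)$ rather than over $ST$ itself can introduce spurious denominators, so I must argue that the relevant syzygies descend to honest polynomial syzygies. Controlling this, and handling the codimension-$\geq2$ \emph{coincidence locus} $\{\lambda_i=\lambda_j,\ i\neq j\}$ where $\tilde Q$ need not be diagonal, is where the hypotheses (no multiple components, $\op{codim}\op{Char}^\C(\E)>1$) must be used carefully to show such loci are negligible and contribute no new generators to $\op{Syz}(P,Q)$.
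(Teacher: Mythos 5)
Your syzygy computation is in the right spirit --- the paper's own proof likewise shows that the $\op{Mat}_{m\times m}(ST)$-module $\op{Syz}(P,Q)$ is generated by the single symmetry relation and then invokes Theorem \ref{DS} --- but your normalization step contains a genuine gap. You begin by rewriting $F$ in first-order evolutionary form so as to force $K=Q$, i.e.\ $[P,Q]=0$, and only afterwards invoke diagonalizability of $P$. These two normalizations are incompatible: the evolutionary rewriting (the trick used in Theorem \ref{thM8}) replaces $P$ by a larger matrix $\t\1+P_0$ in new dependent variables, and it is this new symbol, not the original one, that commutes with $Q$; there is no reason the new symbol is diagonalizable over $ST$ or has squarefree determinant, so the hypotheses of the theorem do not transfer through the reduction. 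Conversely, if you keep the original diagonal $P=\op{diag}(p_1,\dots,p_m)$ with the $p_i$ pairwise coprime (which is what ``no multiple components'' gives), then $[P,Q]=0$ reads $(p_i-p_j)q_{ij}=0$, forcing $q_{ij}=0$ \emph{identically} for $i\ne j$ since $ST$ is a domain --- your ``coincidence locus'' is a pointwise notion and plays no role at the level of polynomial identities. So in your normalized picture $Q$ is literally diagonal, which is strictly more special than what the theorem covers: the symmetry condition only yields $PQ=KP$ with $K\ne Q$ in general, and then $Q$ has a genuinely nonzero off-diagonal part. Your reduction has silently changed the problem.

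The fix is to drop the evolutionary reduction and work with general $K$, as the paper does. From $p_iq_{ij}=k_{ij}p_j$ (no summation) and pairwise coprimality of the squarefree $p_i$ one gets $q_{ij}=r_{ij}p_j$, $k_{ij}=r_{ij}p_i$ for $i\ne j$, i.e.\ $Q=RP+D$ and $K=PR+D$ with $D$ diagonal; the hypothesis $\op{codim}\op{Char}^\C(\E)\ge2$ gives that $p_j$ and $q_{jj}$ have no common factor; and then any syzygy $AP+BQ=(A+BR)P+BD=0$ forces columnwise $B=-LP$ and $A=LD-BR=LK$, so $\op{Syz}(P,Q)$ is generated by $(K,-P)$ --- exactly the symmetry relation --- and Theorem \ref{DS} concludes. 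Note also that your anticipated difficulty with fraction-field denominators does not arise: ``diagonalizable over the algebra $ST$'' means the conjugating matrix $S$ is invertible over $ST$ itself, and $(A,B)\mapsto(AS,BS)$ identifies $\op{Syz}(P,Q)$ with $\op{Syz}(S^{-1}PS,S^{-1}QS)$ without introducing any denominators.
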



 \begin{proof}
We can assume
from the beginning that $P$ is diagonal $P=\op{diag}(p_1,\dots,p_n)$ for some polynomials on $T^*$: $p_i\in ST$.
Denote the symbol of the symmetry $G$ by $Q$. This is also a matrix with polynomial entries,
$Q=[q_{ij}]_{m\times m}$.

The symmetry condition gives the following matrix syzygy:
 $$
PQ=KP\ \Leftrightarrow q_{ij}p_i=k_{ij}p_j \text{ (no summation).}
 $$
Because the characteristic variety has no multiple components,
we conclude from here $q_{ij}=r_{ij}p_j$, $k_{ij}=r_{ij}p_i$ for $i\ne j$.
This means that $Q=RP+D$ and $K=PR+D$ for some diagonal matrix $D$. Since $\op{codim}\op{Char}^\C(\E)\ge2$,
$p_i$ and $q_{ii}$ have no common factors.

Then the syzygy $AP+BQ=0$ is $(A+BR)P+BD=0$, which yields $B=-LP$ and $A=LD-BR=LK$.
Consequently the $\op{Mat}_{m\times m}(ST)$-module $\op{Syz}(P,Q)$ is 1-dimensional
and generated by $(K,-P)$. The result follows from Theorem \ref{DS}.
  \end{proof}

 \begin{rk}
Based on the generators of the syzygy module $\op{Syz}(P,Q)$ one can check that multi-brackets
of the rows of the $2m\times m$ matrix differential operator $\Bigl[{{F}\atop{G}}\Bigr]$
vanish as a consequence of the symmetry condition.
 \end{rk}

A similar argument shows that an analog of Theorem \ref{tre} holds for vector nonlinear differential
operators, under some genericity assumptions on the characteristic variety/ideal (implying that
dimension $n$ shall be sufficiently big compared to the number of equations in $\E$ and $m>1$).


 \begin{examp}\rm
Consider the non-linear wave equation
 $$
F[v]^i=v^i_{tt}-\sum_{j=1}^n\frac{\p}{\p x^j}\Bigl(|v|^{-2a}\frac{\p v^i}{\p x^j}\Bigr)=0,
 $$
where $v^i\in C^\infty(\R^{1+n}(t,x^1,\dots,x^n))$, $i=1,\dots,m$.
Its symbol is diagonal, and the scaling symmetry
$G[v]^k=\xi(v^k)$ for
 $$
\xi=t\p_t+(1-2a)\sum_{j=1}^n x^j\p_{x^j}+2\sum_{i=1}^m v^i\p_{v^i}
 $$
satisfies the property that the characteristic variety of the joint system $\E:F=G=0$ has codimension 2.
This system $\E$ is compatible and there are invariant solutions.
 \end{examp}

\section{Exact solvability of differential equations}\label{S3}

According to Cartan's test solutions to a compatible system of differential equations $\E$
are formally parametrized by $p$ functions of $g$ arguments (and some functions with fewer arguments).
These numbers are expressed through the characteristic variety and the characteristic sheaf $\mathcal{K}$
over it as follows: $g=\dim_\C\op{Char}^\C(\E)+1$, $p=\op{deg}\op{Char}^\C(\E)\cdot\dim\mathcal{K}$,
see \cite{BCG$^3$,KL$_2$}. Denote the abstract space of such functions by $\mathfrak{S}_p^g$.

Closed form solutions refer to parametrization of the generic stratum
of the solutions space by a differential operator $S:\mathfrak{S}_p^g\stackrel\sim\to\op{Sol}(\E)$.
By \cite{C$_2$} for underdetermined ODEs ($g=1$) this is
tantamount\footnote{In loc.cited only the classical Monge case $p=1$ (1 equation on 2 unknowns) was treated,
but the general case is similar.} to
internal equivalence of the equation equipped with the contact distribution $(\E,\mathcal{C}_\E)$
to some mixed jet space $J^\z(\R,\R^p)$. 
Here $\z=(i_1,\dots,i_p)$ is a multi-index characterizing
the orders of the dependent variables.


 \begin{examp}\rm
Consider the equation for null curves in Lorentzian space of $1+2$ dimensions
 $$
x'(t)^2+y'(t)^2=1\quad\Leftrightarrow\quad dt^2-dx^2-dy^2=0.
 $$
An obvious solution involves one arbitrary function and the quadrature:
$x=\int\cos\phi(t)dt$, $y=\int\sin\phi(t)dt$.
But it can also be integrated in the closed form:
 \begin{multline*}
t=\sigma''(\tau)-\sigma(\tau),\quad x=\sigma''(\tau)\cos\tau+\sigma'(\tau)\sin\tau,\\ y=-\sigma''(\tau)\sin\tau+\sigma'(\tau)\cos\tau.\qquad
 \end{multline*}

This equation in $1+n$ dimensions cannot be solved in closed form for $n>2$. Indeed internally
 $$
\E=\R^{n+1}\times S^{n-1}=\{(t,x_1,\dots,x_n,x_1',\dots,x_n'):\sum_{i=1}^n(x_i')^2=1\}.
 $$
The Cartan distribution on $\E$ is generated by $\D_t=\p_t+\sum_1^nx_i'\p_{x_i}$ and the subspace
$\Pi=TS^{n-1}=\langle\p_{x_1'},\dots,\p_{x_n'}|\sum x_i'\p_{x_i'}=0\rangle$: $\mathcal{C}_\E=\R\D_t+\Pi$.

Let $\Delta_1=\mathcal{C}_\E$ and $\Delta_2=[\Delta_1,\Delta_1]$ be its derived distribution.
Then $\dim\Delta_1=n$, $\dim\Delta_2=2n-1$ and we have
$\Delta_2=\Delta_1+\langle x_i'\p_{x_j}-x_j'\p_{x_i}\rangle$. The next derived
distribution is $\Delta_3=[\Delta_1,\Delta_2]=T\E$.

Thus by dimensional reasons the only possible corresponding jet-space is
$J^{\z}(\R,\R^{n-1})$, $\z=(1,\dots,1,2)$, with coordinates
$t,u_1,\dots,u_{n-1}$, $u_1',\dots,u_{n-1}',u_{n-1}''$. Its Cartan distribution is
 $$
\tilde\Delta_1=\langle\D_t,\p_{u_1'},\dots,\p_{u_{n-2}'},\p_{u_{n-1}''}\rangle,\
\text{ where }\
\D_t=\p_t+\sum_{i=1}^{n-1}u_i'\p_{u_i}+u_{n-1}''\p_{u_{n-1}'}.
 $$
The derived distribution is equal to $\tilde\Delta_2=\tilde\Delta_1+\langle\p_{u_1},\dots,\p_{u_{n-2}},\p_{u_{n-1}'}\rangle$
and $\tilde\Delta_3=TJ^\z$.

Though the dimensions coincide, the distributions on $\E$ and $J^\z$ are not equivalent. The reason is that
$\Pi$, which is the maximal involutive space of the bracket $\La^2\Delta_1\to\Delta_2/\Delta_1$, is not the Cauchy
characteristic space for $\Delta_2$. But in the second case $\tilde\Pi=\langle\p_{u_1'},\dots,\p_{u_{n-2}'},\p_{u_{n-1}''}\rangle$,
which is the maximal involutive space of the bracket $\La^2\tilde\Delta_1\to\tilde\Delta_2/\tilde\Delta_1$, is the Cauchy
characteristic space for $\tilde\Delta_2$.
 \end{examp}


 \begin{examp}\rm
Consider the Monge equation $w'(x)=(z'(x))^2$. The general solution depends on 1 function
of 1 variable and the form via quadrature is obvious, but here is the closed form solution:
 $$
x=\sigma''(\tau),\ w=\tau^2\sigma''(\tau)-2\tau\sigma'(\tau)+2\sigma(\tau),\ z=\tau\sigma''(\tau)-\sigma'(\tau).
 $$
The explanation behind this is the Engel normal form for rank 2 distributions in $\R^4$.
However the next candidate -- the Hilbert-Cartan equation
 \begin{equation}\label{HC}
w'=(z'')^2
 \end{equation}
is no longer integrable in closed form (without quadratures) as was demonstrated in 1912 by D.Hilbert.
In 1914 \'E.\,Cartan gave a criterion for resolution of underdetermined ODEs in closed form \cite{C$_2$},
which we referred to above.

Already in 1910 Cartan found that the symmetry group of (\ref{HC}) is $G_2$
(though it was not written like this in \cite{C$_1$}, he surely knew this) and proved that it is
the most symmetric equation among such Monge equations with finite-dimensional symmetry groups
(linearizable equations have infinite-dimensional group of symmetries).
 \end{examp}


The more general problem when a PDE is integrable in closed form via solutions of a
simpler equation (usually ODE) is known as the method of Darboux (that is we allow
for the operator $S$ above to involve quadratures and some other nonlocalities).
For instance, the Liouville equation is Darboux integrable
 $$
u_{xy}=e^u\quad\Longrightarrow\quad u=\log\frac{2f'(x)g'(y)}{(f(x)+g(y))^2},
 $$
while the sin-Gordon equation $u_{xy}=\sin u$ is not.

 \begin{examp}\rm
The following overdetermined system of PDE on $\R^2$ appeared in \cite{C$_1$}
($\l$ is a parameter to be excluded):
 \begin{equation}\label{dagger}
u_{xx}=\tfrac13\lambda^3,\ u_{xy}=\tfrac12\lambda^2,\ u_{yy}=\lambda.
 \end{equation}
It is a compatible involutive system.
The general solution is parametrized by 1 function of 1 argument
 \begin{multline*}
x=x,\ y=z''(\t)+x\t,\\ u=xz(\t)+z'(\t)z''(\t)-\tfrac12w(\t)-\tfrac12\t\,z''(\t)^2-
\tfrac12\t^2x\,z''(\t)-\tfrac16\t^3x^2,
 \end{multline*}
where $w'(\t)=(z''(\t))^2$.
In fact, this system has a common characteristic $\p_x-\l\p_y$, which lifts to the Cauchy characteristic
of the Cartan distribution (of rank 3). The quotient by the Cauchy characteristic is a 5-dimensional
manifold with rank 2 distribution equivalent to (\ref{HC}).

By a Lie-B\"acklund type theorem \cite{C$_1$} the contact symmetry group of (\ref{dagger})
coincides with the internal symmetry group of (\ref{HC}), and so is $G_2$.

According to Goursat \cite{Gou} the general form of overdetermined involutive (in this case: compatible
with a common characteristic) system of 2nd order PDE on the plane is
 \begin{equation}\label{GtE}
r+2\lambda s+\lambda^2t=2\psi,\  s+\lambda t=\psi_\lambda,\ t=\psi_{\lambda\lambda},
 \end{equation}
where we use the classical notations $r=u_{xx}$, $s=u_{xy}$, $t=u_{yy}$ and
suppose $\psi_{\lambda\lambda\lambda}\ne0$ (nonlinearity).
System (\ref{dagger}) corresponds\footnote{One also need to change $y\mapsto-y$ to match the sign.
\label{ftnt5}} to $\psi=\lambda^3/3!$

Removing the last equation from this system we obtain a determined parabolic PDE $\E$ of the 2nd order.
It has the largest contact symmetry group (among non-linear equations) for $\psi=\lambda^3/3!$
in which case excluding $\lambda$ we obtain
the Goursat equation
 \begin{equation}\label{ddagger}
4(2s-t^2)^3+(3r-6st+2t^3)^2=0.
 \end{equation}
This equation has the same symmetry group $G_2$ and it
can be parame\-trized as the 2D tangent cone $\rho(\lambda)+\mu\,\rho'(\lambda)$
to the twisted cubic $\rho(\lambda)$:
 $$
r=\tfrac13\lambda^3+\lambda^2\mu,\ s=\tfrac12\lambda^2+\lambda\mu,\ t=\lambda+\mu.
 $$
Excluding $\lambda$ and $\mu$ we get equation (\ref{ddagger}).

It has the following geometric reduction to (\ref{HC}), giving exact solutions for (\ref{ddagger}).
The double characteristic $\p_x-\lambda\p_y$ lifts to
 $$
\xi=\D_x-\lambda\D_y=\p_x-\lambda\p_y+(p-\l q)\p_u-\tfrac16\l^3\p_p-\tfrac12\l^2\p_q,
 $$
which together with $\p_\mu=\l^2\p_r+\l\p_s+\p_t$ forms the integrable characteristic rank 2
distribution $\Pi\subset\mathcal{C}_\E$. Quotient by it maps the Cartan distribution $\mathcal{C}_\E$
(of rank 4) to a rank 3 distribution $\bar\Delta$ on a 5-dimensional manifold $M^5$. This rank 3 distribution
is the derivative distribution of the unique rank 2 distribution $\Delta$ which maps to zero
under restriction of the natural bracket $\La^2\bar\Delta\to TM/\bar\Delta$.

If we identify $M^5$ with $\R^5(y,u,p,q,\l)$, then
$\Delta=\langle\p_y+q\p_u+\frac12\l^2\p_p+\l\p_q,\p_\l\rangle$ and
$\bar\Delta=\bar\Delta+\langle\p_q+\l\p_p\rangle$. Thus we see that $(M^5,\Delta)$
is equivalent to Hilbert-Cartan equation (\ref{HC}).
 \end{examp}


The following deformation $\E_\epsilon$ of the Goursat equation was studied in \cite{T}:
 \begin{equation}\label{agger}
(4+\epsilon)(2s-t^2)^3+(3r-6st+2t^3)^2=0.
 \end{equation}
Here $\epsilon>0$ and for every such number (\ref{agger}) is hyperbolic\footnote{Outside
the submanifold in $\E_\epsilon$ given by Cartan equation (\ref{dagger})!}
and it has maximal symmetry algebra of dimension 9 among all hyperbolic 2nd order
PDEs on the plane, which are neither of Monge-Ampere nor of Goursat type (see \cite{T} for details).
The Lie algebra of symmetries has Levi decomposition $\mathfrak{g}=\op{sl}_2\ltimes\mathfrak{r}$
(radical $\mathfrak{r}$ is 6-dimensional).

Except for this family there is one more hyperbolic equation with 9-dimensional symmetry algebra
($\mathfrak{g}$ has the same form but different $\mathfrak{r}$)
 \begin{equation}\label{Ge1}
3rt^3+1=0.
 \end{equation}
All these equations are Darboux integrable. The last one was studied in Goursat \cite{Gou}.
Here is one intermediate integral of order 2:
 \begin{equation}\label{Ge2}
s\,t+1=0.
 \end{equation}
The system (\ref{Ge1})+(\ref{Ge2}) is involutive and so allows reduction to a rank 2 distribution.
Indeed this system can be re-written in the form similar to (\ref{dagger}) ---
using Goursat representation (\ref{GtE}) with $\psi=-\frac43\l^{3/2}$ we get$^\text{\ref{ftnt5}}$
 \begin{equation}\label{ECeq1}
u_{xx}=\tfrac13\l^{3/2},\ u_{xy}=\l^{1/2},\ u_{yy}=-\l^{-1/2}.
 \end{equation}
This system is compatible (compatibility writes formally as $\l_x=\l\l_y$
but this relation expresses the prolongation) and the characteristic field is $\xi=\D_x-\l\D_y$,
which is also the Cauchy characteristic field for the Cartan distribution $\mathcal{C}_\E$
on the equation \cite{K$_4$}.

Quotient of this rank 3 distribution $\mathcal{C}_\E$ by $\xi$ gives a rank 2 distribution on 5-manifold
$M^5$, which corresponds to the Monge equation
 \begin{equation}\label{1/3}
w'=(z'')^{1/3}.
 \end{equation}
This latter is equivalent to (\ref{HC}) and has $\op{Lie}(G_2)$ algebra of symmetries.
Thus by Cartan's version of Lie-B\"acklund theorem \cite{C$_1$} the contact symmetry algebra of
the involutive system (\ref{Ge1})+(\ref{Ge2}) is the same as for (\ref{1/3}) --- it's one
more representation of $G_2$.

 \begin{rk}
These all are realizations of the non-compact (split) form of $G_2$ as the maximal symmetric model.
The compact form of $G_2$ is realized by the automorphism group of the Calabi almost complex structure
$(S^6,J)$. By \cite{K$_5$} this is the maximal symmetric model that acts on
non-linear overdetermined non-integrable\footnote{This means the corresponding Nijenhuis tensor $N_J$ is non-degenerate.} Cauchy-Riemann equations
 $$
\zeta_{\bar z}=\Phi(z,w,\zeta),\ \zeta_{\bar w}=\Psi(z,w,\zeta).
 $$
 \end{rk}

Higher Monge equations were studied in \cite{AK}, and all maximal symmetric models were identified
as the following underdetermined ODEs: $y^{(m)}(x)=(z^{(n)}(x))^2$ (these again cannot be solved
without quadratures according to Cartan's criterion).

The symmetries here can be thought of as internal or external -- they coincide
by a version of Lie-Backlund theorem from \cite{AK}.

There are also PDE models for these according to \cite{K$_4$}. We will demonstrate some in
examples, which also indicate a relation to the projective geometry of curves
(the tool from \cite{DZ}) --- in our case (most symmetric models) these are
the rational normal curves.


 \begin{examp}\rm
The following system is involutive\footnote{It has Lie class $\oo=1$, i.e.
the solutions depend upon 1 function of 1 argument.}
 \begin{equation}\label{relaxation}
u_{xxx}=\tfrac14\lambda^4,u_{xxy}=\tfrac13\lambda^3,u_{xyy}=\tfrac12\lambda^2,u_{yyy}=\lambda
 \end{equation}
It has type $3E_3$ in notations of \cite{K$_3$}. Quotient by Cauchy characteristic $\xi=\D_x-\l\D_y$
yields a rank 2 distribution on a manifold of dimension 8. The weak growth vector of this distribution
(we refer for the definition and properties to \cite{AK}) is $(2,1,2,3)$ and the
corresponding Monge system is
 \begin{equation}\label{monge11}
y''=\tfrac12(z''')^2,\ w'=\tfrac13(z''')^3.
 \end{equation}
The corresponding graded nilpotent (Carnot-Tanaka) Lie algebra is free truncated of length 4 with 2-dimensional
fundamental space $\mathfrak{g}_{-1}$.

Similarly we reduce $4E_4$
 $$
u_{xxxx}=\tfrac15\lambda^5,u_{xxxy}=\tfrac14\lambda^4,
u_{xxyy}=\tfrac13\lambda^3,u_{xyyy}=\tfrac12\lambda^2,u_{yyyy}=\lambda
 $$
to a rank 2 distribution with growth $(2,1,2,3,4)$ and the Monge equation
 $$
y'''=\tfrac12(z^{iv})^2,\ v''=\tfrac13(z^{iv})^3,\ w'=\tfrac14(z^{iv})^4.
 $$

We can modify the symmetric models without destroying the symmetry algebra.
For the above $3E_3$ we get its tangent 2D cone
 \begin{multline*}
\qquad\qquad u_{xxx}=\tfrac14\lambda^4+\lambda^3\mu,\ u_{xxy}=\tfrac13\lambda^3+\lambda^2\mu,\\
u_{xyy}=\tfrac12\lambda^2+\lambda\mu,\ u_{yyy}=\lambda+\mu.\qquad\qquad
 \end{multline*}
This type $2E_3$ system is compatible (the prolongation obeys $\l_x=\l\l_y$),
and its general solution depends on $\oo=2$ functions of 1 argument.

The characteristic is still $\D_x-\lambda\D_y$ (with multiplicity two) and
the contact symmetry algebra is the same as the contact external algebra for (\ref{relaxation})
and the same as the internal algebra for (\ref{monge11}): it has dimension 12 and
Levi decomposition $\mathfrak{g}=\op{sl}_2\ltimes\mathfrak{r}$
(radical $\mathfrak{r}$ is 9-dimensional and it consists of 1-dimensional center and
8-dimensional nil-radical).

Further modification gives the 3D tangent cone of the normal curve, which is a strictly
parabolic 3rd order determined PDE with the same 12-dimensional symmetry algebra $\mathfrak{g}$.
Denoting by $\a,\b,\g,\d$ the 3rd derivatives $u_{xxx},u_{xxy},u_{xyy},u_{yyy}$ we can write this
system of equations parametrically
 \begin{align*}
&& \a=\tfrac14\l^4+\l^3\mu+3\l^2\nu, \quad & \g=\tfrac12\l^2+\l\,\mu+\nu, \qquad {}\\
&& \b=\tfrac13\l^3+\l^2\mu+2\,\l\,\nu, \quad &  \d=\l+\mu, \qquad {}
 \end{align*}
or in the implicit form
 \begin{multline*}
\!\!\! 8\a^3-18\a^2 (4\b\d+8\g^2-12\g\d^2+3\d^4) +27\a\b^2 (18\g-\d^2)+4\a\b\g\d(3\d^2-10\g)\ \\
+8\a\g^3(3\g-\d^2) +27\b^2 (27\b\g\d-8\b\d^3-18\g^3+6\g^2\d^2) = 2187\,\b^4/8.
 \end{multline*}
Its solution space depends on $\oo=3$ functions of 1 argument and is
intrinsically related to the Monge system (\ref{monge11}).

Higher analogs of the above are valid on the basis of works \cite{AK,K$_4$}.
 \end{examp}

\section{Conclusion}\label{S4}

In this paper we discussed compatible overdetermined as well as underdetermined systems $\E$
of differential equations. We indicated that symmetries generically produce particular
'automodel' solutions to $\E$. But if symmetries are few, they do not allow complete integration.

On the other hand the abundance of symmetries is a sign of integrability. In Section \ref{S3}
we briefly indicated via examples how large algebras of symmetries make models unique.
These symmetries often do not live on the equation-manifold, and a covering is required to see them.
In this way Darboux integrability \cite{D,F} was recast into the language of group quotients
in \cite{AFV} and new examples were integrated, see \cite{AF}.

Integration in closed form is too restrictive in the context of PDEs, and instead one
considers reductions to ODEs. These latter are not arbitrary in the maximal symmetric cases,
as they also possess symmetries. This restricts the models (like those Monge equations
from \cite{AK}) and gives a method to understand integrability.


\end{document}